\newtheorem{theorem}{Theorem}[section]
\newtheorem{corollary}[theorem]{Corollary}
\newtheorem{lemma}[theorem]{Lemma}
\newtheorem{proposition}[theorem]{Proposition}
\newtheorem{problem}{Problem}
\theoremstyle{definition}
\newtheorem{alg}[theorem]{Algorithm}
\newtheorem{example}[theorem]{Example}
\newtheorem{defn}[theorem]{Definition}
\newcommand{\Z}{\mathbb{Z}}
\newcommand{\N}{\mathbb{N}}
\newcommand{\h}{\mathcal{H}}
\newcommand{\M}{\mathcal{M}}
\newenvironment{customthm}[1]
  {\innercustomthm}
  {\endinnercustomthm}
\newenvironment{customprop}[1]
  {\innercustomprop}
  {\endinnercustomprop}
\begin{document}
\title[Numerical Sets Associated to a Numerical Semigroup]{Enumerating numerical sets associated to a numerical semigroup}
\date{\today}

\author[Chen]{April Chen}
\address{Department of Mathematics, Harvard University, Cambridge, MA 02138}
\email{aprilchen@college.harvard.edu}

\author[Kaplan]{Nathan Kaplan}
\address{Department of Mathematics, University of California, Irvine, 419 Rowland Hall, Irvine, CA 92697}
\email{nckaplan@math.uci.edu}

\author[Lawson]{Liam Lawson}
\address{Department of Mathematics, University of California, Irvine, 419 Rowland Hall, Irvine, CA 92697}
\email{ljlawson@uci.edu}

\author[O'Neill]{Christopher O'Neill}
\address{Mathematics Department, San Diego State University, San Diego, CA 92182}
\email{cdoneill@sdsu.edu}

\author[Singhal]{Deepesh Singhal}
\address{Department of Mathematics, University of California, Irvine, 419 Rowland Hall, Irvine, CA 92697}
\email{singhald@uci.edu}

\begin{abstract}
A numerical set $T$ is a subset of $\N_0$ that contains $0$ and has finite complement.  The atom monoid of $T$ is the set of $x \in \N_0$ such that $x+T \subseteq T$.  Marzuola and Miller introduced the anti-atom problem:\ how many numerical sets have a given atom monoid?  This is equivalent to asking for the number of integer partitions with a given set of hook lengths.  We introduce the void poset of a numerical semigroup $S$ and show that numerical sets with atom monoid $S$ are in bijection with certain order ideals of this poset.  We use this characterization to answer the anti-atom problem when $S$ has small type.
\end{abstract}
\maketitle

\section{Introduction}


\subsection{The anti-atom problem}
A \emph{numerical set} $T$ is a subset of $\N_0 = \{0,1,2,\ldots\}$ that contains $0$ and has a finite complement. The elements of $\N_0\setminus T$ are called \emph{gaps}. The set of gaps of $T$ is denoted by $\h(T)$ and the number of gaps is the \emph{genus} of $T$, denoted by $g(T)$. The largest gap is the \emph{Frobenius number} of $T$, denoted by $F(T)$. A numerical set $S$ that is closed under addition is a \emph{numerical semigroup}.  We say that $n_1,\ldots, n_t \in S$ is a \emph{set of generators} of $S$ if $S$ is the set of all linear combinations of these elements with nonnegative integer coefficients.  That is,
\[
S = \langle n_1,\ldots, n_t \rangle = \{a_1 n_1 + \cdots + a_t n_t \mid a_1,\ldots, a_t \in \N_0\}.
\]
A numerical semigroup $S$ has a unique minimal set of generators and the cardinality of this set is the \emph{embedding dimension} of $S$, denoted by $e(S)$.  The smallest nonzero element of $S$ is called its \emph{multiplicity}, denoted by $m(S)$.

The set of \emph{pseudo-Frobenius} numbers of a numerical semigroup $S$ is defined by
\[
PF(S)=\{P\in\h(S)\mid P+S\setminus\{0\}\subseteq S\}.
\]
Clearly $F(S)$ is one of the pseudo-Frobenius numbers of $S$. The number of pseudo-Frobenius numbers of $S$ is the \emph{type} of $S$, denoted by $t(S)$.  

Antokoletz and Miller defined the \emph{atom monoid} of a numerical set $T$ in \cite{AdjointPaper} as
\[
A(T)=\{x\in \N_0 \mid x+T\subseteq T\}.
\]
It is easy to see that $A(T)$ is always a numerical semigroup contained in $T$. It is also referred to as the \emph{associated semigroup} of $T$. Marzuola and Miller raised the \emph{Anti-Atom problem} \cite{Miller}.
\begin{problem}[Anti-Atom problem]\label{anti_atom_problem}
Let $S$ be a numerical semigroup.  How many numerical sets $T$ have $A(T) = S$?
\end{problem}
\noindent This number is denoted by $P(S)$. The numerical sets $T$ for which $A(T)=S$ are called \emph{numerical sets associated to $S$}. In this paper we focus on computing $P(S)$.

Motivation for studying $P(S)$ comes from the theory of integer partitions.  A partition $\lambda$ has an associated multiset of hook lengths, denoted $\mathbb{H}(\lambda)$.  Let $H(\lambda)$ denote the underlying set of hook lengths of $\lambda$.  For detailed definitions and for a bijection between numerical sets and integer partitions, see \cite[Section 2]{NathanPartition}.  Keith and Nath show that $H(\lambda)$ is the set of gaps of a numerical semigroup $S$ and that numerical sets $T$ associated to $S$ are in bijection with partitions $\lambda$ with $H(\lambda) = \h(S)$ \cite[Corollary 1]{partition bijection}. (See also \cite[Proposition 3]{NathanPartition}.) Therefore, Problem \ref{anti_atom_problem} is equivalent to the following.
\begin{problem}\label{hookset}
Let $S$ be a numerical semigroup.  How many partitions $\lambda$ have $H(\lambda) = \h(S)$?
\end{problem}
\noindent This correspondence between partitions and numerical semigroups has been studied in several recent papers \cite{NathanPartition, Complementary numerical sets, Almost Sym Arf, decomposing partitions, Arf}.

\subsection{Previous results on $P(S)$}
It is easy to see that $T$ and $A(T)$ have the same Frobenius number. Since there are $2^{F-1}$ numerical sets with Frobenius number $F$, it follows that
\[
\sum_{S\colon F(S)=F}P(S)=2^{F-1}.
\]
Marzuola and Miller consider numerical semigroups of the form $N_F=\{0,F+1\rightarrow\}$, where the $\rightarrow$ indicates that all positive integers greater than $F+1$ are in $N_F$ \cite{Miller}. They prove that there is a positive constant $\gamma\approx 0.4844$ such that
\[
\lim_{F\to\infty}\frac{P(N_F)}{2^{F-1}}=\gamma.
\]
This means that nearly half of numerical sets $T$ with Frobenius number $F$ have $A(T) = N_F$.
In \cite{With Lin}, Singhal and Lin consider similar families of numerical semigroups.  Let $D$ be a finite set of positive integers, take $F>2\max(D)$ and define
\[
N(D,F)=\{0\}\cup\{F-l\mid l\in D\}\cup \{F+1\rightarrow\}.
\]
They show that for each $D$, there is a positive constant $\gamma_D$ such that
\[
\lim_{F\to\infty}\frac{P(N(D,F))}{2^{F-1}}=\gamma_D.
\]

Antokoletz and Miller define the \emph{dual} of a numerical set $T$ as
\[
T^*:=\{x\in\Z\mid F(T)-x\notin T\}
\]
and show that $A(T)=A(T^*)$ \cite{AdjointPaper}. Constantin, Houston-Edwards, and Kaplan interpret this construction in terms of partitions. They show that if $T$ corresponds to the partition $\lambda$, then $T^*$ corresponds to the conjugate partition $\Tilde{\lambda}$. Since $\mathbb{H}(\lambda) = \mathbb{H}(\Tilde{\lambda})$, we see that $H(A(T)) = H(A(T^*))$, which implies $A(T)=A(T^*)$ \cite[Proposition 12]{NathanPartition}.

\begin{theorem}\cite[Proposition 1]{Miller}\label{TBUS}
Let $S$ be a numerical semigroup. If $T$ is a numerical set with $A(T)=S$, then $S\subseteq T\subseteq S^*$.  Moreover, $A(S)=A(S^*)=S$.
\end{theorem}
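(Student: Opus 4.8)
The plan is to establish the two containments $S \subseteq T$ and $T \subseteq S^*$ and the two identities $A(S) = S$, $A(S^*) = S$ separately, each directly from the definitions, relying on the two facts already recorded above: that $T$ and $A(T)$ share a Frobenius number, and that $A(U) = A(U^*)$ for every numerical set $U$. I would also note at the outset that $S^*$ is itself a numerical set, so that $A(S^*)$ is defined: indeed $0 \in S^*$ since $F(S) \notin S$, and every integer exceeding $F(S)$ lies in $S^*$, so $S^*$ contains $0$ and has finite complement.

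First I would prove $S \subseteq T$. Let $s \in S = A(T)$. By the definition of the atom monoid, $s + T \subseteq T$, and since $0 \in T$ this gives $s = s + 0 \in T$. As $s$ was arbitrary, $S \subseteq T$. The very same one-line argument shows $A(U) \subseteq U$ for any numerical set $U$, which I will reuse below.

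Next I would prove $T \subseteq S^*$. Write $F = F(T)$; since $A(T) = S$, the two share a Frobenius number, so $F = F(S)$ and in particular $F \notin T$. Fix $t \in T$; I want $F - t \notin S$, which is exactly the condition for $t \in S^*$. Suppose instead that $F - t \in S = A(T)$. Then $(F - t) + T \subseteq T$, and applying this to $t \in T$ yields $F = (F - t) + t \in T$, contradicting $F \notin T$. Hence $F - t \notin S$ and $t \in S^*$; the case $t > F$ is automatic, since then $F - t < 0$ is not in $S$. Thus $T \subseteq S^*$.

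For the ``moreover'' claim, I would first record $A(S) = S$: the containment $A(S) \subseteq S$ is the reused one-line argument, while $S \subseteq A(S)$ holds because a numerical semigroup is closed under addition, so $s + S \subseteq S$ for every $s \in S$. Finally, $A(S^*) = S$ follows by applying the identity $A(U) = A(U^*)$ with $U = S$, which gives $A(S^*) = A(S) = S$. None of these steps presents a genuine obstacle; the only point demanding care is the inclusion $T \subseteq S^*$, where one must unwind the definition of the dual and observe that placing $F - t$ in the atom monoid forces the Frobenius number back into $T$.
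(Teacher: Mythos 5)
Your proof is correct. The paper itself gives no argument for this statement --- it is quoted as \cite[Proposition 1]{Miller} --- so there is nothing to compare against; your derivation is the standard definitional one, and the two external facts you lean on ($F(T) = F(A(T))$ and $A(U) = A(U^*)$) are both recorded in the paper before the theorem, so invoking them is legitimate. The only point worth flagging is that if you wanted the proof fully self-contained you would also need to justify $F(T) = F(A(T))$ (which follows quickly: $A(T) \subseteq T$ gives one inequality, and any $x > F(T)$ satisfies $x + T \subseteq T$ for the other), but as written the argument is complete and correct.
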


Constantin, Houston-Edwards, and Kaplan define a \emph{missing pair} of $S$ to be a pair of gaps of $S$ that sums to $F(S)$ \cite[Section 7]{NathanPartition}. The set
\[
M(S) := \{a \colon a\not\in S,\ F(S) - a\not\in S\}
\]
of gaps in missing pairs is the \emph{void} of $S$.  They show that $S^*=S\cup M(S)$ \cite[Lemma 3]{NathanPartition}. It~is easy to check that $|M(S)|=2g(S)-F(S)-1$.

Theorem \ref{TBUS} implies that $P(S)=1$ if and only if $M(S)=\emptyset$.  A numerical semigroup $S$ for which $x \in S$ if and only if $F(S)-x \not\in S$ is called \emph{symmetric}. Therefore, $P(S) = 1$ if and only if $S$ is symmetric \cite[Corollary 2]{Miller}.  Fr\"oberg, Gottlieb, and H\"aggkvist prove that $S$ is symmetric if and only if $t(S) = 1$ \cite[Proposition 2]{Type}.

If $S$ is symmetric, then $F(S)$ is odd. There is a corresponding family of numerical semigroups with even Frobenius numbers. A numerical semigroup $S$ is called \emph{pseudo-symmetric} if $F(S)$ is even and $M(S)=\left\{\tfrac{1}{2} F(S)\right\}$. If $S$ is pseudo-symmetric, then $t(S) = 2$ \cite{Type}. Theorem \ref{TBUS} implies that if $S$ is pseudo-symmetric, then $P(S)=2$ \cite[Corollary 2]{Miller}.

These two families could lead one to guess a close relationship between the size of the void $|M(S)|$ and $P(S)$. However this relationship is subtle. A numerical semigroup with $P(S) = 2$ can have arbitrarily large $|M(S)|$ \cite[Proposition 16]{NathanPartition}. We do have an upper bound on $P(S)$ obtained from Theorem \ref{TBUS}.
\begin{corollary}\cite[Corollary 3]{NathanPartition}
For any numerical semigroup $S$,
\[
P(S)\leq 2^{|M(S)|}=2^{2g(S)-F(S)-1}.
\]
\end{corollary}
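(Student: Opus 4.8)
The plan is to combine Theorem \ref{TBUS} with the explicit description $S^* = S \cup M(S)$ and then count. First I would fix a numerical semigroup $S$ and consider an arbitrary numerical set $T$ satisfying $A(T) = S$. By Theorem \ref{TBUS}, every such $T$ is sandwiched as $S \subseteq T \subseteq S^*$, and using the identity $S^* = S \cup M(S)$ recorded above, this becomes $S \subseteq T \subseteq S \cup M(S)$.

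The key observation is that a numerical set $T$ in this range is completely determined by the subset $T \cap M(S) \subseteq M(S)$. Indeed, since $S \subseteq T$ and $T \subseteq S \cup M(S)$, we can recover $T$ as $T = S \cup (T \cap M(S))$. Hence the assignment $T \mapsto T \cap M(S)$ is an injection from the collection of numerical sets satisfying $S \subseteq T \subseteq S^*$ into the power set of $M(S)$.

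Consequently, the number of numerical sets $T$ lying between $S$ and $S^*$ is at most $2^{|M(S)|}$, the number of subsets of $M(S)$. Since every numerical set counted by $P(S)$ satisfies $A(T) = S$ and therefore lies in this range, we obtain $P(S) \leq 2^{|M(S)|}$. Substituting the cardinality formula $|M(S)| = 2g(S) - F(S) - 1$ from the excerpt then yields the stated expression $2^{2g(S)-F(S)-1}$.

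I do not anticipate a genuine obstacle here: the content of the corollary is essentially packaging Theorem \ref{TBUS} as a counting bound, so the work is a one-line containment argument followed by counting subsets. The one point worth emphasizing is why this is only an inequality and not an equality, namely that not every subset $M' \subseteq M(S)$ produces a numerical set $S \cup M'$ whose atom monoid is exactly $S$, so the injection above need not be surjective onto those $T$ with $A(T) = S$. Identifying precisely which subsets occur is exactly the role played by the void poset and its order ideals later in the paper, which sharpens this crude bound into an exact count.
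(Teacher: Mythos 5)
Your proof is correct and matches the approach the paper intends: the corollary is cited from \cite{NathanPartition} and the surrounding text explicitly says the bound is ``obtained from Theorem \ref{TBUS},'' which is exactly your argument of sandwiching $T$ between $S$ and $S^* = S \cup M(S)$ and counting subsets of $M(S)$. Your closing remark about why the inequality is generally strict is also consistent with the paper's subsequent development via order ideals of the void poset.
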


\subsection{The void poset and our main results}
We define a partial ordering on the void of a numerical semigroup $S$. Given $x,y\in M(S)$, we say $x\preccurlyeq y$ when $y-x\in S$. This poset is called the \emph{void poset} of $S$ and is denoted by $(\M(S),\preccurlyeq)$. In Proposition \ref{maxElementsArePF}, we show that the maximal elements of $(\M(S),\preccurlyeq)$ are precisely the pseudo-Frobenius numbers of $S$ other than $F(S)$.

It is known that every numerical semigroup $S$ satisfies $t(S) \le 2g(S)-F(S)$ \cite[Proposition 2.2]{H Nari}. If equality holds, then $S$ is called \emph{almost symmetric}. Both symmetric and pseudo-symmetric numerical semigroups are almost symmetric. In Proposition \ref{Almost Sym}, we show that almost symmetric numerical semigroups are those for which the void poset has no nontrivial relations.

A subset $I\subseteq M(S)$ is an \emph{order ideal} if for any $x,y\in M(S)$ satisfying $x\preccurlyeq y$ and $x\in I$, we have $y\in I$. If $A(T)=S$, then $T\setminus S\subseteq M(S)$. For an arbitrary subset $I\subseteq M(S)$, $I\cup S$ is not necessarily a numerical set associated to $S$. In Proposition \ref{posetlarger}, we show that the $I\subseteq M(S)$ for which $S \subseteq A(I\cup S)$ are precisely the order ideals of $(\M(S),\preccurlyeq)$.

In Section~\ref{sec:classify}, we introduce the notion of \emph{Frobenius triangles} of $S$ and use this concept to characterize the order ideals $I$ of $(\M(S),\preccurlyeq)$ for which $T = I \cup S$ is a numerical set associated to $S$ (Theorem \ref{Characterisation}).  This is one of the main results of this paper, as it yields an algorithm for computing $P(S)$ (Algorithm~\ref{a:frobtriangles}).  We also define the \emph{pseudo-Frobenius graph} of a numerical semigroup $S$ and use it to give a lower bound for $P(S)$ in Corollary \ref{lbound}.

We use these tools to analyze $P(S)$ for numerical semigroups of small type. As mentioned above, $t(S)=1$ if and only if $P(S)=1$. In Theorem \ref{t2p2} we show that if $t(S) = 2$, then $P(S) = 2$.  In Section \ref{sec:type3} we solve the much more difficult problem of characterizing $P(S)$ for numerical semigroups of type $3$.  In Theorem \ref{type 3 characterize} we prove that if $t(S) = 3$, then $P(S) \in \{2,3,4\}$, and we characterize in terms of the pseudo-Frobenius numbers of $S$ when each value occurs.

The relationship between $t(S)$ and $P(S)$ is not as straightforward for numerical semigroups of larger type. In Proposition \ref{t=4 P(S) large}, we show that a numerical semigroup of type $4$ can have $P(S)$ arbitrarily large. In Proposition \ref{t large P(S)=2} we show that given $t\geq 2$, there is a numerical semigroup with $t(S)=t$ and $P(S)=2$.  

A numerical semigroup $S$ is said to be of \emph{maximal embedding dimension} if $e(S)=m(S)$.  It is known that $t(S)\leq m(S)-1$ \cite[Corollary 1.23]{sanchesTextbook}, and that $S$ has maximal embedding dimension if and only if $t(S)=m(S)-1$ \cite[Corollary 2.2]{sanchesTextbook}.  Therefore, semigroups of maximal embedding dimension have a natural characterization in terms of their type.  In Section \ref{sec:large}, we compute $P(S)$ for a certain class of these semigroups.

\section{The void poset}

Recall that the void poset $(\M(S),\preccurlyeq)$ of $S$ is defined by $x\preccurlyeq y$ if and only if $y-x\in S$. For $x,y\in M(S)$, we write $x\prec y$ if $y-x\in S\setminus\{0\}$. In this section we use the structure of this poset to classify the numerical sets $T$ for which $S\subseteq A(T)$. In the next section we determine when such a numerical set also satisfies $A(T)\subseteq S$, thus classifying the numerical sets associated to $S$.

A poset $(\mathcal P,\preccurlyeq)$ is \emph{self-dual} if there exists a bijection $\phi:\mathcal P \rightarrow \mathcal P$ such that $a \preccurlyeq b$ if and only if $\phi(b) \preccurlyeq \phi(a)$.

\begin{lemma}\label{selfdual}
For any numerical semigroup $S$, $(\M(S),\preccurlyeq)$ is self-dual.
\end{lemma}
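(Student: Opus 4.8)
The plan is to exhibit an explicit order-reversing involution rather than an abstract bijection. The natural candidate is the Frobenius-complementation map $\phi\colon M(S) \to M(S)$ defined by $\phi(a) = F(S) - a$. This is exactly the pairing that defines the void in the first place: by definition $a \in M(S)$ precisely when both $a \notin S$ and $F(S) - a \notin S$, so the complementation $a \mapsto F(S)-a$ is already visibly symmetric in the data defining $M(S)$. I would therefore verify that $\phi$ is a well-defined bijection and then check the order-reversal property by a one-line computation.

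First I would check that $\phi$ maps $M(S)$ into itself. Let $a \in M(S)$, so $a \notin S$ and $F(S) - a \notin S$. Writing $b = \phi(a) = F(S) - a$, I need $b \notin S$ and $F(S) - b \notin S$. The first condition is $F(S) - a \notin S$, which holds by the assumption $a \in M(S)$; the second condition is $F(S) - b = a \notin S$, which again holds since $a \in M(S)$. Hence $b \in M(S)$, so $\phi$ is well-defined. Since $\phi(\phi(a)) = F(S) - (F(S) - a) = a$, the map $\phi$ is an involution and therefore a bijection of $M(S)$.

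Finally I would verify the order-reversing condition. For $a, b \in M(S)$ we have $a \preccurlyeq b$ iff $b - a \in S$. On the other hand, $\phi(b) \preccurlyeq \phi(a)$ means $\phi(a) - \phi(b) \in S$, and $\phi(a) - \phi(b) = (F(S) - a) - (F(S) - b) = b - a$. Thus $\phi(b) \preccurlyeq \phi(a)$ holds iff $b - a \in S$, which is identical to the condition for $a \preccurlyeq b$. Therefore $a \preccurlyeq b$ if and only if $\phi(b) \preccurlyeq \phi(a)$, and $\phi$ is the desired self-duality.

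I do not expect a genuine obstacle here: the entire argument rests on the observation that both the membership condition for $M(S)$ and the relation $x \preccurlyeq y$ are defined in terms of differences that are invariant (up to sign) under $a \mapsto F(S)-a$. The only point requiring any care is confirming that $\phi$ actually lands in $M(S)$ rather than merely in $\N_0 \setminus S$, and this is precisely where the two-sided definition of the void (both $a$ and $F(S)-a$ lie outside $S$) is used. Everything else is a direct substitution, so the self-duality is essentially built into the definition of the void poset.
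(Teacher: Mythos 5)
Your proof is correct and uses exactly the same map as the paper, namely conjugation $\phi(a)=F(S)-a$, with the same one-line computation $\phi(a)-\phi(b)=b-a$ establishing order reversal; you simply spell out the well-definedness check that the paper leaves to the reader. No issues.
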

\begin{proof}
Define $\phi: M(S) \to M(S)$ by $\phi(x)=F(S)-x$. By the definition of $M(S),\ \phi$ is well-defined. The result follows by noting that $y-x=\phi(x)-\phi(y)$.
\end{proof}

We refer to this map as \emph{conjugation} on $(\M(S),\preccurlyeq)$. For $x \in M(S)$, define $\overline{x}=F(S)-x$.

\begin{proposition}\label{maxElementsArePF}
The set of maximal elements of $(\M(S),\preccurlyeq)$ is
\[
\max (\M(S),\preccurlyeq)=PF(S)\setminus\{F(S)\}.
\]
\end{proposition}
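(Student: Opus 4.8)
The plan is to prove the two inclusions separately. Before doing so, I would observe that $F(S)\notin M(S)$: if $a=F(S)$ then $F(S)-a=0\in S$, so $a$ fails the defining condition of $M(S)$. Thus $F(S)$ is not even an element of the poset, which is why it is removed on the right-hand side. With this in place, the statement reduces to showing that an element of $M(S)$ is maximal in $(\M(S),\preccurlyeq)$ if and only if it is a pseudo-Frobenius number of $S$.

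For the inclusion $PF(S)\setminus\{F(S)\}\subseteq\max(\M(S),\preccurlyeq)$, I would first check that any $P\in PF(S)\setminus\{F(S)\}$ actually lies in $M(S)$. Since $P$ is a gap, $P\notin S$; and if $F(S)-P\in S$, then $F(S)-P\in S\setminus\{0\}$ (it is nonzero because $P\neq F(S)$), so the defining property $P+(S\setminus\{0\})\subseteq S$ would force $F(S)=P+(F(S)-P)\in S$, contradicting that $F(S)$ is a gap. Hence $F(S)-P\notin S$ and $P\in M(S)$. Maximality is then immediate from the same property: if $P\prec y$ with $y\in M(S)$, then $y-P\in S\setminus\{0\}$, so $y=P+(y-P)\in S$, contradicting $y\notin S$.

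The reverse inclusion, that every maximal element is pseudo-Frobenius, is the step I expect to be the main obstacle, and here I would exploit the conjugation map $\overline{x}=F(S)-x$ from Lemma~\ref{selfdual}. Let $y$ be maximal in $(\M(S),\preccurlyeq)$; since $y\in M(S)$ it is a gap and $y\neq F(S)$, so it remains to verify $y+s\in S$ for every $s\in S\setminus\{0\}$. Suppose instead that $y+s\notin S$ for some such $s$. I would analyze $F(S)-(y+s)$ by cases. If $F(S)-(y+s)\in S$, then writing $\overline{y}=F(S)-y=\bigl(F(S)-(y+s)\bigr)+s$ expresses $\overline{y}$ as a sum of two elements of $S$, so $\overline{y}\in S$; but self-duality gives $\overline{y}\in M(S)$, so $\overline{y}\notin S$, a contradiction. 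Therefore $F(S)-(y+s)\notin S$, and combined with $y+s\notin S$ this shows $y+s\in M(S)$. Since $s\in S\setminus\{0\}$ we have $y\prec y+s$, contradicting the maximality of $y$. Either branch yields a contradiction, so no such $s$ exists and $y\in PF(S)$.

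The crux of the argument is recognizing that the obstruction to $y$ being pseudo-Frobenius either produces a strictly larger element of the void (violating maximality directly) or, via the conjugate $\overline{y}$, places $\overline{y}$ inside $S$ (violating $\overline{y}\in M(S)$); the self-duality of the poset is exactly what guarantees that $\overline{y}\in M(S)$ and thereby closes the case analysis. Once both inclusions are established, the proposition follows.
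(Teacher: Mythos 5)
Your proof is correct and follows essentially the same route as the paper's: one direction uses that $P+(S\setminus\{0\})\subseteq S$ forces any element above $P$ in the void to equal $P$, and the other splits $y+s$ into the cases $F(S)-(y+s)\in S$ (contradicting $F(S)-y\notin S$) and $y+s\in M(S)$ (contradicting maximality). The only cosmetic difference is that you cite self-duality where the definition of $M(S)$ already gives $F(S)-y\notin S$ directly.
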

\begin{proof}
Let $P$ be a maximal element of $(\M(S),\preccurlyeq)$ and let $F = F(S)$. Given $s\in S\setminus\{0\}$, we know that $P+s\not\in M(S)$, since otherwise $P\prec P+s$. Thus, either $P+s\in S$ or $P+s\in\h(S)\setminus M(S)$. In the latter case, $F-P-s \in S$, but this implies $F-P=(F-P-s)+s \in S$, which contradicts the fact that $P \in M(S)$. We conclude that $P+s\in S$ and so $P\in PF(S)$. Moreover, since $P\in M(S)$ we know that $P\neq F$.

For the other direction, suppose $P\in PF(S)\setminus\{F\}$. If $F-P\in S$, then by the definition of $PF(S)$ we see that $F=P+(F-P)\in S$, which is a contradiction. Therefore, $F-P\not\in S$ and $P\in M(S)$. Next, if there is some $x\in M(S)$ such that $P\preccurlyeq x$, then $x-P\in S$ and $x\not\in S$. Since $P\in PF(S)$, the only way this could happen is if $x-P=0$, that is, $x=P$. We conclude that $P$ is a maximal element of $(\M(S),\preccurlyeq)$.
\end{proof}

Since $(\M(S),\preccurlyeq)$ is self-dual it follows that its minimal elements are
\[
\min (\M(S),\preccurlyeq)=\Big\{\overline{P}\mid P\in PF(S)\setminus\{F(S)\}\Big\}.
\]

\begin{example}
The void posets of $S_1 = \{0,4,8,10\rightarrow\}$ and $S_2=\langle 6, 25, 29 \rangle$ are as follows:
\begin{center}
\begin{tikzpicture}
\node (3) at (0,0) {$6$};
\node (4) [right of = 3] {$7$};
\node (2) [below of = 4] {$3$};
\node (1) [below of = 3] {$2$};
\draw [black] (1) -- (3);
\draw [black] (2) -- (4);

\node (24) at (5,0) {$52$};
\node (23) [below left of = 24] {$23$};
\node (22) [below right of = 24] {$46$};
\node (21) [below right of = 23] {$17$};
\draw [black] (21) -- (23);
\draw [black] (22) -- (24);
\draw [black] (21) -- (22);
\draw [black] (23) -- (24);
\end{tikzpicture}    
\end{center}
\end{example}

Recall that an order ideal of a poset is a subset $I$ such that if $x \in I$ and $x \preccurlyeq y$, then $y \in I$.
\begin{proposition}\label{posetlarger}
Let $S$ be a numerical semigroup with $M(S)=M$ and $I \subseteq M$. We have $S \subseteq A(I \cup S)$ if and only if $I$ is an order ideal of $(\M(S),\preccurlyeq)$.
\end{proposition}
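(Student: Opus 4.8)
The plan is to first unwind the condition $S \subseteq A(I\cup S)$ into a concrete membership statement. Writing $T = I\cup S$ and $F = F(S)$, I would note that $S\subseteq A(T)$ means $s+T\subseteq T$ for every $s\in S$. Since $S$ is closed under addition, $s+S\subseteq S\subseteq T$ holds automatically, so the only real content is that $s+I\subseteq T$ for every $s\in S$. Hence the condition $S\subseteq A(I\cup S)$ is equivalent to the following:\ for every $x\in I$ and every $s\in S$, we have $x+s\in I\cup S$. Both directions of the proposition are then checked against this reformulation.

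For the forward direction I would assume $S\subseteq A(I\cup S)$ and take $x\in I$ together with $y\in M$ satisfying $x\preccurlyeq y$. By the definition of the void poset, $s:=y-x\in S$, so the reformulated condition applied to $x\in I$ gives $y = x+s\in I\cup S$. But $y\in M$ forces $y\notin S$, so in fact $y\in I$. This shows $I$ is an order ideal, and this direction is essentially immediate once the reformulation is available.

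The backward direction requires one small but essential observation. Assuming $I$ is an order ideal, I take $x\in I\subseteq M$ and $s\in S$ and must show $x+s\in I\cup S$. If $x+s\in S$ there is nothing to prove, so suppose $x+s\notin S$. The key claim is that in this case $x+s\in M$, that is, $F-(x+s)\notin S$. Indeed, if $F-(x+s)$ were in $S$, then $F-x = \bigl(F-(x+s)\bigr)+s\in S$, contradicting $x\in M$. Once $x+s\in M$ is established, the relation $x\preccurlyeq x+s$ holds since $(x+s)-x = s\in S$, so the order ideal property yields $x+s\in I\subseteq I\cup S$, completing this direction.

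The main obstacle is precisely this closure step in the backward direction:\ verifying that adding a semigroup element to a void element either returns to $S$ or remains in the void $M$. Everything else is bookkeeping, but this step is where the defining property of $M$ (that $F-x$ is also a gap whenever $x\in M$) is genuinely used, and it is exactly what makes the poset relation $x\preccurlyeq x+s$ available so that the order ideal hypothesis can be invoked.
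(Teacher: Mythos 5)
Your proposal is correct and follows essentially the same route as the paper: the backward direction is the paper's three-case analysis (with the ``$x+s$ a gap outside $M$'' case ruled out by exactly the same computation $F-x=(F-(x+s))+s\in S$), and the forward direction is the same one-line application of $y=x+(y-x)$. The only cosmetic difference is that you first reduce $S\subseteq A(I\cup S)$ to the statement $s+I\subseteq I\cup S$ for all $s\in S$, which the paper does implicitly.
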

\begin{proof}
First suppose that $I$ is an order ideal. We show that for each $s\in S$, $s+I \subseteq S \cup I$. Suppose $x \in I$.
\begin{itemize}
    \item Case 1: If $s + x \in S$, there is nothing else to check.
    \item Case 2: If $s + x \in \h(S)\setminus M(S)$, then $F-s-x \in S$. Therefore, 
    \[
    F-x= (F-s-x) + s \in S.
    \] 
    This contradicts the fact that $x \in M(S)$, so this case does not occur.
    \item Case 3: If $s + x \in M(S)$, then $ x \preccurlyeq s + x$ in $(\M(S),\preccurlyeq)$, so $s + x \in I$.
\end{itemize}

Conversely, suppose $S \subseteq A(I \cup S)$. Consider $x,y\in M(S)$ with $x\in I$ and $x\preccurlyeq y$. Then $y-x\in S$ and therefore  $y-x\in A(S\cup I)$.  That is,
\[
(y-x)+(S\cup I)\subseteq (S\cup I).
\]
In particular this implies that $y=(y-x)+x\in S\cup I$. Since $y\in M(S)$, we know that $y\notin S$ and therefore, $y\in I$. We conclude that $I$ is an order ideal of $(\M(S),\preccurlyeq)$.
\end{proof}

\begin{theorem} \label{t2p2}
Let $S$ be a numerical semigroup of type $2$. We have $P(S) = 2$.
\end{theorem}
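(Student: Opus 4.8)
The plan is to reduce to counting order ideals of the void poset and then to show that, when $t(S)=2$, only the two extreme order ideals survive. Writing $PF(S)=\{P,F\}$ with $P<F=F(S)$, Proposition \ref{maxElementsArePF} tells us that $P$ is the \emph{unique} maximal element of $(\M(S),\preccurlyeq)$; since the poset is self-dual via conjugation (Lemma \ref{selfdual}), $\overline{P}=F-P$ is its unique minimal element. Because $(\M(S),\preccurlyeq)$ is finite, a unique maximal (resp.\ minimal) element is in fact a maximum (resp.\ minimum): every $y\in M(S)$ satisfies $y\preccurlyeq P$ and $\overline{P}\preccurlyeq y$. By Theorem \ref{TBUS} every numerical set associated to $S$ has the form $T=S\cup I$ with $I\subseteq M(S)$, and by Proposition \ref{posetlarger} the condition $S\subseteq A(T)$ forces $I$ to be an order ideal. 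So it suffices to determine which order ideals $I$ satisfy $A(S\cup I)=S$.

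First I would dispose of the easy direction. The order ideals $I=\emptyset$ and $I=M(S)$ give $T=S$ and $T=S^*$, and $A(S)=A(S^*)=S$ by Theorem \ref{TBUS}; since $t(S)=2$ means $S$ is not symmetric, $M(S)\neq\emptyset$ and these two sets are distinct, so $P(S)\geq 2$. The substance of the argument is the reverse inequality: I claim that every \emph{proper nonempty} order ideal $I$ yields $A(S\cup I)\supsetneq S$, so that such $I$ do not contribute. The strategy is to exhibit a specific element of $A(S\cup I)\setminus S$, namely $P$ itself. Being a nonempty up-set, $I$ contains the maximum $P$; being proper, $I$ omits the minimum $\overline{P}$.

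The heart of the proof — and the step I expect to be the main obstacle — is the claim that $P+x\in S$ for every $x\in I$. I would establish this by excluding the two ways $P+x$ could fail to lie in $S$. It cannot lie in $M(S)$: otherwise $P+x\preccurlyeq P$ by maximality of $P$, forcing $-x=P-(P+x)\in S$, which is absurd for $x>0$. It also cannot lie in $\h(S)\setminus M(S)$: that would mean $F-(P+x)=\overline{P}-x\in S$, i.e.\ $x\preccurlyeq\overline{P}$, and minimality of $\overline{P}$ would then give $x=\overline{P}$, contradicting $\overline{P}\notin I$. The delicate point is precisely that both the uniqueness of the maximal element and the uniqueness of the minimal element are needed here; this is where the hypothesis $t(S)=2$ is used in full. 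Granting the claim, $P+(S\cup I)\subseteq S\cup I$: the inclusion $P+I\subseteq S$ is the claim, while $P+S\subseteq S\cup I$ follows from $I$ being an order ideal via Proposition \ref{posetlarger}. Hence $P\in A(S\cup I)$, and since $P\notin S$ we conclude $A(S\cup I)\supsetneq S$. Therefore the only associated numerical sets are $S$ and $S^*$, giving $P(S)=2$.
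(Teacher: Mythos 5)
Your proof is correct and follows essentially the same route as the paper's: both hinge on $P$ being the unique maximal and $\overline{P}=F-P$ the unique minimal element of $(\M(S),\preccurlyeq)$, together with the same case analysis of where $P+x$ can land for $x\in I\subseteq M(S)$ — you simply run the argument contrapositively (showing $P\in A(S\cup I)$ for every proper nonempty order ideal $I$), whereas the paper starts from an associated $T\neq S$, extracts a witness $x$ with $P+x\notin T$, and forces $x=\overline{P}$ so that $I=M(S)$. One small slip: the inclusion $P+S\subseteq S\cup I$ does not follow from Proposition \ref{posetlarger} (which concerns $s+T$ for $s\in S$), but simply from $P\in PF(S)$ (so $P+(S\setminus\{0\})\subseteq S$) together with $P\in I$.
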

\begin{proof}
Let $T$ be a numerical set associated to $S$. By Proposition \ref{posetlarger}, $T = S \cup I$ for some order ideal $I$ of $(\mathcal{M}(S),\preccurlyeq)$. We prove that either $I = M(S)$ and $T = S\cup M(S)$ or $I = \emptyset$ and $T = S$.

Suppose $PF(S) = \{P,F\}$ with $P<F$. We have $PF(S) \setminus \{F\}=\{P\}$, which means that $P$ is the unique maximal element of $(\M(S),\preccurlyeq)$. This also implies that $P=\max(M(S))$. Moreover, since $(\M(S),\preccurlyeq)$ is self-dual we also see that it has a unique minimal element $\overline{P} = F-P$.  We prove that if $I\neq\emptyset$, then $\overline{P} \in I$, which implies that $I = M(S)$.

Since $t(S)\neq 1$, we know that $P(S)\geq 2$. Suppose $T \neq S$. We know that $I=T\setminus S$ is a nonempty order ideal of $(\M(S),\preccurlyeq)$. Since $(\M(S),\preccurlyeq)$ has a unique maximal element, we must have $P\in I$. Since $P\not\in A(T)$, there is some $x\in T$ for which $P+x\not\in T$. Since $S=A(T)$, we know that $x\notin S$. This means that $x\in I\subseteq M(S)$. We have seen that $F-P$ is the unique minimal element of $(\M(S),\preccurlyeq)$, so $F-P\leq x$.

Now if $F-P<x$, then $F<P+x$, which would contradict the fact that $P+x\notin T$. Therefore $F-P=x$ and hence $F-P\in I$.
Finally, since the unique minimal element of $(\M(S),\preccurlyeq)$ is in $I$, we conclude that $I = M(S)$.
\end{proof}

\begin{example}
Consider $S=\langle19,21,24\rangle$, which has $t(S) = 2$ and $PF(S)=\{98,113\}$. Note that $(\M(S),\preccurlyeq)$ has a unique maximal element and a unique minimal element. If $I\cup S$ is a numerical set associated to $S$, then either $I=\emptyset$ or $I=M(S)$.
\begin{figure}[h]
    \centering
    \begin{tikzpicture}
    \node (1) at (0,0) {$98$};
    \node (2) at (-1,-0.7) {$74$};
    \node (3) at (0,-0.7) {$77$};
    \node (4) at (1,-0.7) {$79$};
    \node (5) at (-1.5,-1.4) {$53$};
    \node (6) at (-0.5,-1.4) {$55$};
    \node (7) at (0.5,-1.4) {$58$};
    \node (8) at (1.5,-1.4) {$60$};
    \node (9) at (-1,-2.1) {$34$};
    \node (10) at (0,-2.1) {$36$};
    \node (11) at (1,-2.1) {$39$};
    \node (12) at (0,-2.8) {$15$};
    \draw [black] (1) -- (2);
    \draw [black] (1) -- (3);
    \draw [black] (1) -- (4);
    \draw [black] (2) -- (5);
    \draw [black] (2) -- (6);
    \draw [black] (3) -- (5);
    \draw [black] (3) -- (7);
    \draw [black] (4) -- (6);
    \draw [black] (4) -- (7);
    \draw [black] (4) -- (8);
    \draw [black] (9) -- (5);
    \draw [black] (9) -- (6);
    \draw [black] (9) -- (7);
    \draw [black] (10) -- (6);
    \draw [black] (10) -- (8);
    \draw [black] (11) -- (7);
    \draw [black] (11) -- (8);
    \draw [black] (12) -- (9);
    \draw [black] (12) -- (10);
    \draw [black] (12) -- (11);
    \end{tikzpicture}
    \caption{Void poset of $S=\langle19,21,24\rangle$}
    \label{fig:my_label}
\end{figure}
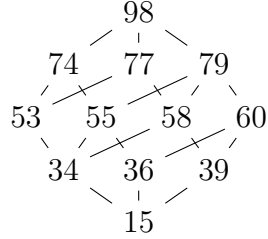
\end{example}

\begin{proposition}
Let $S$ be a numerical semigroup.
\begin{enumerate}
\item We have $M(S)=\emptyset$ if and only if $S$ is symmetric.
\item We have $|M(S)|=1$ if and only if $S$ is pseudo-symmetric.
\end{enumerate}
\end{proposition}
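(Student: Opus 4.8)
The plan is to treat the two parts separately, relying on the additive structure of $S$ for part~(1) and on the conjugation involution of Lemma~\ref{selfdual} for part~(2). Throughout write $F = F(S)$.

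For part~(1), I would first observe that one half of the symmetry condition holds for \emph{every} numerical semigroup: if $x\in S$ and $F-x\in S$, then $F = x+(F-x)\in S$ since $S$ is closed under addition, contradicting $F\notin S$; thus $x\in S$ always implies $F-x\notin S$. Consequently $S$ is symmetric if and only if the remaining implication $F-x\notin S \Rightarrow x\in S$ holds for all $x$, equivalently (taking the contrapositive) if and only if every $a\notin S$ satisfies $F-a\in S$. Unwinding the definition $M(S)=\{a : a\notin S,\ F-a\notin S\}$, the condition $M(S)=\emptyset$ says precisely that no $a\notin S$ has $F-a\notin S$, i.e.\ that every $a\notin S$ has $F-a\in S$. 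Matching these two statements gives $M(S)=\emptyset$ if and only if $S$ is symmetric.

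For part~(2), the backward direction is immediate: if $S$ is pseudo-symmetric then by definition $M(S)=\{\tfrac12 F\}$, so $|M(S)|=1$. For the forward direction, I would invoke the conjugation map $\overline{x}=F-x$ from Lemma~\ref{selfdual}, which restricts to a bijection of $M(S)$ onto itself and is an involution since $\overline{\overline{x}}=x$. An involution on a finite set partitions it into fixed points and two-element orbits, so a set of odd size must contain a fixed point. With $|M(S)|=1$ the unique element $a\in M(S)$ must therefore be fixed, giving $F-a=a$, hence $F=2a$ is even and $a=\tfrac12 F$. This is exactly the definition of pseudo-symmetric.

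I do not expect a serious obstacle here; the only point that requires care is recognizing that the forward implication of the symmetry condition is automatic from additive closure (so that $M(S)=\emptyset$ supplies exactly the missing half), and, in part~(2), checking that $\overline{x}=F-x$ genuinely preserves $M(S)$ before applying the fixed-point/parity argument. Both are short verifications, and the self-duality already recorded in Lemma~\ref{selfdual} does most of the work for the second part.
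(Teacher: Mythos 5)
Your proof is correct, but it takes a genuinely different route from the paper. The paper's proof is a two-line counting argument: it recalls the identity $|M(S)|=2g(S)-F(S)-1$ and then quotes the characterizations from Fr\"oberg--Gottlieb--H\"aggkvist that $S$ is symmetric if and only if $F(S)=2g(S)-1$ and pseudo-symmetric if and only if $F(S)=2g(S)-2$, so both parts follow by substitution. You instead argue directly from the definitions: for part (1) you observe that the implication $x\in S\Rightarrow F-x\notin S$ is automatic from additive closure, so symmetry is exactly the statement that no gap $a$ has $F-a$ also a gap, which is literally $M(S)=\emptyset$; for part (2) you use the conjugation involution $x\mapsto F-x$ on $M(S)$ and a parity/fixed-point argument to force the unique element of an odd-sized void to equal $\tfrac12 F$. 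Both arguments are sound. The paper's version is shorter but leans on external citations and the genus formula for $|M(S)|$; yours is self-contained, makes transparent why half of the symmetry condition comes for free, and the involution argument in part (2) actually yields slightly more (any void of odd cardinality must contain $\tfrac12 F$, forcing $F$ even). Either proof would be acceptable here.
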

\begin{proof}
Recall that $|M(S)|=2g(S)-F(S)-1$. A numerical semigroup is symmetric if and only if $F(S)=2g(S)-1$ \cite[Lemma 1]{Type}. This occurs if and only if $M(S)=\emptyset$. A numerical semigroup is pseudo-symmetric if and only if $F(S)=2g(S)-2$ \cite[Lemma 3]{Type}. This happens if and only if $|M(S)|=1$.
\end{proof}

\begin{proposition}\cite[Proposition 2.2]{H Nari}\label{type_bound}
We have
\[
t(S)\leq 2g(S)-F(S) = |M(S)| + 1.
\]
\end{proposition}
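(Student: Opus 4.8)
The plan is to deduce the bound directly from Proposition~\ref{maxElementsArePF}, which already carries all the substance. First, observe that the stated equality $2g(S)-F(S)=|M(S)|+1$ is nothing more than the identity $|M(S)|=2g(S)-F(S)-1$ recorded earlier in the excerpt. Thus the only thing requiring proof is the inequality $t(S)\leq|M(S)|+1$.

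For this I would count the maximal elements of the void poset in two ways. On one hand, the set of maximal elements of any finite poset is a subset of its ground set, so $(\M(S),\preccurlyeq)$ has at most $|M(S)|$ maximal elements. On the other hand, Proposition~\ref{maxElementsArePF} identifies this set of maximal elements as $PF(S)\setminus\{F(S)\}$. Since $F(S)\in PF(S)$, this set has exactly $t(S)-1$ elements. Combining the two observations yields $t(S)-1\leq|M(S)|$, which is the desired inequality.

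The one point to check carefully is that removing $F(S)$ from $PF(S)$ genuinely drops the count by exactly one: that $F(S)$ is a pseudo-Frobenius number (immediate from the definition, as noted in the text) and that it is not itself among the maximal elements being counted. The latter holds because $F(S)\notin M(S)$; indeed $F(S)-F(S)=0\in S$, so $F(S)$ fails the defining condition for membership in $M(S)$. Beyond this bookkeeping there is no real obstacle, since the structural work---the characterization of the maximal elements of the void poset---has already been carried out in Proposition~\ref{maxElementsArePF}. In effect, this proposition recovers the classical bound of \cite[Proposition 2.2]{H Nari} as an immediate corollary of the poset description, with almost symmetric semigroups (where equality holds) corresponding to the case in which every element of $M(S)$ is maximal, i.e.\ the poset has no nontrivial relations.
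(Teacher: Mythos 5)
Your proposal is correct and follows essentially the same route as the paper: the paper's proof simply notes that $PF(S)\setminus\{F(S)\}\subseteq M(S)$ (a consequence of Proposition~\ref{maxElementsArePF}) and counts, giving $t(S)-1\leq |M(S)|=2g(S)-F(S)-1$. Your extra bookkeeping about $F(S)\notin M(S)$ is sound but not a point of divergence.
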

\begin{proof}
We know that $PF(S)\setminus\{F\}\subseteq M(S)$. Therefore, $t(S)-1\leq 2g(S)-F(S)-1$.
\end{proof}

Recall that $S$ is almost symmetric if and only if $t(S)=2g(S)-F(S)$.

\begin{proposition}\label{Almost Sym}
A numerical semigroup $S$ is \emph{almost symmetric} if and only if $(\M(S),\preccurlyeq)$ has no nontrivial relations, that is, $x \preccurlyeq y$ implies $x = y$.
\end{proposition}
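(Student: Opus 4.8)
The plan is to link the arithmetic condition ``almost symmetric'' to the order-theoretic condition ``no nontrivial relations'' by passing through the two immediately preceding propositions, which already control both the maximal elements of the void poset and the relevant cardinality count.

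First I would unwind the definition. Recall that $S$ is almost symmetric when $t(S) = 2g(S) - F(S) = |M(S)| + 1$. By Proposition \ref{type_bound} this is the extreme case of the general bound $t(S) \le |M(S)| + 1$, and that bound comes precisely from the containment $PF(S) \setminus \{F\} \subseteq M(S)$ together with the identity $t(S) - 1 = |PF(S) \setminus \{F\}|$. Thus $S$ is almost symmetric if and only if $|PF(S)\setminus\{F\}| = |M(S)|$. Since $PF(S)\setminus\{F\}$ is a subset of $M(S)$, equality of cardinalities is equivalent to the set equality $PF(S) \setminus \{F\} = M(S)$.

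Next I would apply Proposition \ref{maxElementsArePF}, which identifies $PF(S) \setminus \{F(S)\}$ with the set of maximal elements of $(\M(S),\preccurlyeq)$. Combining this with the previous step, $S$ is almost symmetric if and only if every element of $M(S)$ is a maximal element of the void poset. Finally I would note that ``every element is maximal'' is equivalent to ``no nontrivial relations'': if $x \preccurlyeq y$ with $x \neq y$, then $x$ is not maximal; conversely, if every element is maximal, then no relation $x \prec y$ can occur, so $x \preccurlyeq y$ forces $x = y$. Chaining these equivalences yields the claim.

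I do not expect a serious obstacle, since the mathematical content is carried by the two cited propositions. The only points needing care are the elementary but essential observations that a set containment upgrades to equality once the cardinalities coincide, and that ``all elements maximal'' is the same as the poset being an antichain (equivalently, having no nontrivial relations). Both are routine, so the proof should be short.
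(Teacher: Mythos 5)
Your proposal is correct and follows essentially the same route as the paper: both reduce almost symmetry to the set equality $PF(S)\setminus\{F(S)\} = M(S)$ via the cardinality bound, then invoke Proposition \ref{maxElementsArePF} to identify that set with the maximal elements of the void poset, and conclude that all elements being maximal is the same as having no nontrivial relations. No gaps to report.
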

\begin{proof}
Since $|PF(S)\setminus\{F(S)\}| = t(S) -1$ and $PF(S) \setminus \{F(S)\} \subseteq M(S)$, we see that $S$ is almost symmetric if and only if $PF(S)\setminus\{F\}= M(S)$. Proposition \ref{maxElementsArePF} says that $PF(S)\setminus\{F(S)\}$ is the set of maximal elements of $(\M(S),\preccurlyeq)$. Thus, $S$ is almost symmetric if and only if all elements of $(\M(S),\preccurlyeq)$ are maximal. This is equivalent to $(\M(S),\preccurlyeq)$ having no nontrivial relations.
\end{proof}

We end this section with a quick observation about the parity of $P(S)$.  Theorem \ref{TBUS} and the concept of the dual of a numerical set leads directly to the following result.
\begin{proposition}
If $F(S)$ is even, then $P(S)$ is even.
\end{proposition}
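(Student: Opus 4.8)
The plan is to exhibit the dual $T\mapsto T^*$ as a fixed-point-free involution on the set of numerical sets associated to $S$, so that this set, which has cardinality $P(S)$, splits into two-element orbits. Write $F=F(S)$. Since $A(T)=S$ forces $F(T)=F(S)=F$, for every numerical set $T$ associated to $S$ the dual is computed with the same parameter, $T^*=\{x\in\Z\mid F-x\notin T\}$. First I would check that $T^*$ is again a numerical set associated to $S$: one has $0\in T^*$ because $F=F(T)\notin T$; for $x<0$ we get $F-x>F$, so $F-x\in T$ and thus $x\notin T^*$, while for all sufficiently large $x$ the value $F-x$ is negative and hence not in $T$, so $x\in T^*$. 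Thus $T^*\subseteq\N_0$, contains $0$, and has finite complement, i.e.\ $T^*$ is a numerical set. By the Antokoletz--Miller identity $A(T^*)=A(T)=S$, so $T\mapsto T^*$ sends $\{T\mid A(T)=S\}$ into itself.

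Next I would verify that this map is an involution. Because $A(T^*)=S$, the dual $T^*$ also has Frobenius number $F$, so $T^{**}=\{x\mid F-x\notin T^*\}$. Unwinding the definition, $F-x\in T^*$ is equivalent to $F-(F-x)=x\notin T$, hence $F-x\notin T^*$ is equivalent to $x\in T$; therefore $T^{**}=T$. So $T\mapsto T^*$ is an involution of the finite set $\{T\mid A(T)=S\}$.

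An involution of a finite set partitions it into fixed points and two-element orbits, whence $P(S)\equiv\#\{T\mid A(T)=S,\ T=T^*\}\pmod 2$. It remains to show there are no fixed points when $F$ is even. Suppose $T=T^*$, and note that $F$ even makes $x=F/2$ a nonnegative integer. Evaluating the self-duality at this point, $F/2\in T^*$ if and only if $F-F/2=F/2\notin T$; combined with $T=T^*$ this gives $F/2\in T$ if and only if $F/2\notin T$, a contradiction. Hence no numerical set associated to $S$ is self-dual, every orbit has size two, and $P(S)$ is even.

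The argument is short, so there is no single hard step; the only points requiring care are the routine verification that $T^*$ is a genuine numerical set and the involution identity $T^{**}=T$, both of which reduce to unwinding the definition of the dual using $F(T)=F(T^*)=F$. The conceptual heart is the observation that self-duality is impossible in the even case, since it would force a contradictory membership statement at the midpoint $F/2$.
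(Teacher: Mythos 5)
Your proof is correct and is essentially the same as the paper's: both use the dual map $T\mapsto T^*$ together with the observation that $\tfrac{F(S)}{2}\in T$ if and only if $\tfrac{F(S)}{2}\notin T^*$ to rule out fixed points and pair up the associated numerical sets. You simply spell out the routine verifications (that $T^*$ is a numerical set with $A(T^*)=S$ and that $T^{**}=T$) that the paper leaves implicit.
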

\begin{proof}
Given a numerical set $T$ with $F(T)=F(S)$, note that $\frac{F(S)}{2}\in T$ if and only if $\frac{F(S)}{2}\not\in T^*$. This means $T\neq T^*$. Therefore, we can divide the numerical sets associated to $S$ into pairs.
\end{proof}

\section{Classifying associated numerical sets}\label{sec:classify}

We have seen that if $T$ is a numerical set associated to $S$, it is necessary that $T\setminus S\subseteq M(S)$ and that $T\setminus S$ is an order ideal of $(\M(S),\preccurlyeq)$. In this section we study a class of order ideals that always give rise to numerical sets associated to $S$. This gives a lower bound for $P(S)$. We also obtain a complete characterization of the order ideals that lead to numerical sets associated to $S$.

\begin{defn}
We say a triple $(P,x,y) \in PF(S) \times M(S)^2$ satisfying $P + x + y = F(S)$ is a \emph{Frobenius triangle} of $S$, and let
\[
Tr(S)=\{(P,x,y)\mid P\in PF(S),\; x,y\in M(S),\ P+x+y=F(S)\}
\]
denote the set of Frobenius triangles of $S$.  
Given an order ideal $I\subseteq M(S)$ and a Frobenius triangle $(P,x,y)$, we say that $I$ \emph{satisfies the Frobenius triangle} $(P,x,y)$ if $P$, $x \in I$ and $F(S) - y \notin I$. 
The \emph{pseudo-Frobenius graph} of $S$, denoted by $GPF(S)$, is the graph with vertices $PF(S)\setminus\{F(S)\}$ that has an edge between $P,Q$ if and only if $P+Q-F(S)\in S$.
We denote the number of connected components of $GPF(S)$ by $\kappa(S)$.
\end{defn}



An order ideal $I$ of $(\M(S),\preccurlyeq)$ is self-dual if $x\in I$ implies that $\overline{x}\in I$.

\begin{proposition}\label{sdideals}
If $S$ is a numerical semigroup and $I$ is a self-dual order ideal of $(\M(S),\preccurlyeq~)$, then $I\cup S$ is a numerical set associated to $S$.
\end{proposition}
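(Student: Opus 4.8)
The plan is to write $T = I \cup S$ and establish $A(T) = S$ by proving the two inclusions separately. First I would record the basic features of $T$: it contains $0$ and has finite complement, so it is a numerical set, and since $I \subseteq M(S) \subseteq \h(S)$ while $F(S) \notin M(S)$, the gap set of $T$ is $\h(S) \setminus I$. In particular $F(T) = F(S)$, and I will write $F = F(S)$ throughout.

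The inclusion $S \subseteq A(T)$ is immediate from Proposition \ref{posetlarger}: because $I$ is an order ideal of $(\M(S),\preccurlyeq)$, that result gives $S \subseteq A(I \cup S) = A(T)$. So the content of the proposition is the reverse inclusion $A(T) \subseteq S$, and this is precisely where the self-duality hypothesis on $I$ is used.

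For $A(T) \subseteq S$ I would argue by contradiction. Suppose $w \in A(T)$ but $w \notin S$. Since $0 \in T$, every element of $A(T)$ lies in $T$, so $w \in T = S \cup I$; as $w \notin S$, this forces $w \in I \subseteq M(S)$. Being an element of $M(S)$, $w$ satisfies $\overline{w} = F - w \in M(S)$ as well, and self-duality of $I$ then yields $\overline{w} \in I \subseteq T$. Now $w \in A(T)$ means $w + T \subseteq T$, so $w + \overline{w} \in T$. But $w + \overline{w} = F = F(T)$ is a gap of $T$, a contradiction. Hence no such $w$ exists and $A(T) \subseteq S$.

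Combining the two inclusions gives $A(T) = S$, so $T = I \cup S$ is a numerical set associated to $S$. The proof is short; the only points needing care are the bookkeeping observations that $A(T) \subseteq T$ (which lets me locate $w$ inside $I$) and that $F(T) = F(S)$ (so that $F$ is genuinely excluded from $T$ and the contradiction bites). The essential idea — and the reason the hypothesis is phrased in terms of self-duality — is that self-duality is exactly the condition that lets one pair a putative new atom $w \in M(S)$ with its conjugate $\overline{w}$ and thereby force the Frobenius number $F = w + \overline{w}$ into $T$.
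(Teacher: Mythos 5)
Your proof is correct and follows essentially the same route as the paper: both use Proposition \ref{posetlarger} for the inclusion $S \subseteq A(I \cup S)$ and then use self-duality to pair each $x \in I$ with $\overline{x} = F(S) - x \in I$, so that $x + \overline{x} = F(S) \notin I \cup S$ rules $x$ out of $A(I \cup S)$. Your version merely phrases this as a contradiction and spells out the bookkeeping ($A(T) \subseteq T$ and $F(T) = F(S)$) that the paper leaves implicit.
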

\begin{proof}
By Proposition \ref{posetlarger}, $S\subseteq A(I\cup S)$. Since $I$ is self-dual, $x \in I$ implies $F(S)-x \in I$. However, 
\[
x+(F(S)-x)=F(S) \not\in I\cup S.
\] 
So $x+(I\cup S)\not\subseteq (I\cup S)$ and hence $x\not\in A(I\cup S)$. We conclude that $A(I\cup S)=S$.
\end{proof}

Since self-dual order ideals give numerical sets associated to $S$, the number of self-dual order ideals of $(\M(S),\preccurlyeq)$ is a lower bound for $P(S)$.  We now give a simpler description of self-dual order ideals.
\begin{lemma}\label{check PF self-dual}
Let $I$ be an order ideal of $(\M(S),\preccurlyeq)$. If for each $P\in PF(S)\cap I$ we have $\overline{P}\in I$, then $I$ is self-dual.
\end{lemma}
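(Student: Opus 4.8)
The plan is to show directly that the hypothesis forces $x \in I \implies \overline{x} \in I$, by using the maximal elements of $(\M(S),\preccurlyeq)$ as intermediaries and then transporting the relation through conjugation. The key structural facts I will rely on are Lemma \ref{selfdual}, which tells us that conjugation $\phi(x) = \overline{x}$ reverses the order, and Proposition \ref{maxElementsArePF}, which identifies the maximal elements of $(\M(S),\preccurlyeq)$ as exactly $PF(S)\setminus\{F(S)\}$.

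First I would fix an arbitrary $x \in I$ and climb up to a maximal element. Since $(\M(S),\preccurlyeq)$ is finite, there is a maximal element $P$ of the poset with $x \preccurlyeq P$. Because $I$ is an order ideal and $x \in I$, the relation $x \preccurlyeq P$ forces $P \in I$. By Proposition \ref{maxElementsArePF}, maximality of $P$ means $P \in PF(S)\setminus\{F(S)\}$, so in particular $P \in PF(S) \cap I$. The hypothesis of the lemma now applies and yields $\overline{P} \in I$.

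Next I would transport the relation $x \preccurlyeq P$ through conjugation. By Lemma \ref{selfdual}, the map $\phi(x) = \overline{x}$ satisfies $a \preccurlyeq b$ if and only if $\phi(b) \preccurlyeq \phi(a)$, so from $x \preccurlyeq P$ we obtain $\overline{P} \preccurlyeq \overline{x}$. Since $\overline{P} \in I$ and $I$ is an order ideal, the relation $\overline{P} \preccurlyeq \overline{x}$ gives $\overline{x} \in I$. As $x \in I$ was arbitrary, this establishes that $I$ is self-dual.

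I do not expect any serious obstacle here; the argument is short and the only subtlety is bookkeeping the direction of the order reversal under conjugation. The one point that warrants care is the first step of climbing to a maximal element: one must note that $F(S) \notin M(S)$, so the maximal element $P \in PF(S)\setminus\{F(S)\}$ genuinely lies in $M(S)$ (hence can lie in $I$), and that $\overline{P}$ is well-defined as an element of $M(S)$, which is guaranteed by the definition of the void. Once these are in place, the order-ideal property closes the argument immediately.
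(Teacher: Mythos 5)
Your argument is correct and is essentially identical to the paper's proof: both climb from $x$ to a maximal element $P$, use the order-ideal property (upward closure, in this paper's convention) to get $P\in I$, invoke the hypothesis to obtain $\overline{P}\in I$, and then use the order-reversal of conjugation to conclude $\overline{P}\preccurlyeq\overline{x}$ and hence $\overline{x}\in I$. No issues.
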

\begin{proof}
Consider $x\in I$. Pick a maximal element above $x$, say $P\in PF(S) \setminus\{F(S)\}$ satisfies $x\preccurlyeq P$. Since $I$ is an order ideal, we have $P\in I$. We are given that $\overline{P}\in I$.  Since $x\preccurlyeq P$, we see that $\overline{P}\preccurlyeq \overline{x}$.  This implies $\overline{x}\in I$. We conclude that $I$ is self-dual.
\end{proof}

\begin{lemma}\label{dual}
Let $I$ be a self-dual order ideal of $(\M(S),\preccurlyeq)$. If $x\in I,\ y\in M(S)$, and $y\preccurlyeq x$, then $y\in I$.
\end{lemma}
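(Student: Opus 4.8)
The statement to prove is Lemma~\ref{dual}: if $I$ is a self-dual order ideal of $(\mathcal{M}(S),\preccurlyeq)$, and $x\in I$, $y\in M(S)$ with $y\preccurlyeq x$, then $y\in I$. In words, a self-dual order ideal is automatically closed downward as well as upward, so it is actually an up-and-down closed set—an "interval-free" subset, or more precisely a subset closed under the whole order in both directions. Let me sketch the natural argument.

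The plan is to exploit the self-duality of $I$ together with the fact that conjugation $x\mapsto\overline{x}=F(S)-x$ is an order-reversing bijection on $(\mathcal{M}(S),\preccurlyeq)$ (Lemma~\ref{selfdual}). The key observation is this: the hypothesis $y\preccurlyeq x$ means $x-y\in S$, and applying conjugation reverses this relation. First I would take $x\in I$ and, since $I$ is self-dual, conclude $\overline{x}\in I$. Next, from $y\preccurlyeq x$ I would apply the order-reversing property of conjugation to obtain $\overline{x}\preccurlyeq\overline{y}$; concretely, $y\preccurlyeq x$ gives $x-y\in S$, and since $x-y=\overline{y}-\overline{x}$ (because $\overline{y}-\overline{x}=(F(S)-y)-(F(S)-x)=x-y$), we get $\overline{x}\preccurlyeq\overline{y}$.

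Now the argument closes quickly. Since $\overline{x}\in I$, $\overline{y}\in M(S)$, and $\overline{x}\preccurlyeq\overline{y}$, and $I$ is an order ideal (upward closed), we conclude $\overline{y}\in I$. Applying self-duality of $I$ once more to the element $\overline{y}\in I$ yields $\overline{\overline{y}}=y\in I$, which is exactly what we want. The whole proof is therefore a short chain: push $x$ to $\overline{x}$ via self-duality, flip the relation via conjugation, push up to $\overline{y}$ via the order-ideal property, and flip back via self-duality.

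I do not expect any real obstacle here; the lemma is essentially the formal statement that self-duality upgrades upward closure to full closure, and every ingredient (self-duality of $I$, the order-reversing identity $\overline{y}-\overline{x}=x-y$, and the order-ideal property) is already available. The only point requiring a moment of care is getting the direction of the conjugation-induced relation correct—verifying that $y\preccurlyeq x$ yields $\overline{x}\preccurlyeq\overline{y}$ rather than the reverse—but this follows immediately from the displayed identity $\overline{y}-\overline{x}=x-y\in S$ and needs no separate lemma.
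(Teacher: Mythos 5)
Your proof is correct and is essentially identical to the paper's: both apply self-duality to get $\overline{x}\in I$, use the order-reversing identity $\overline{y}-\overline{x}=x-y\in S$ to obtain $\overline{x}\preccurlyeq\overline{y}$, invoke the order-ideal property to get $\overline{y}\in I$, and apply self-duality once more to conclude $y\in I$.
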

\begin{proof}
Since $I$ is self-dual, we have $\overline{x}\in I$. Next, since $\overline{x}\preccurlyeq\overline{y}$ and $I$ is an order ideal, we have $\overline{y}\in I$. Finally, since $I$ is self-dual, we conclude that $y\in I$.
\end{proof}

\begin{lemma}\label{determined}
If $I_1$, $I_2$ are self-dual order ideals of $(\M(S),\preccurlyeq)$ and
\[
I_1\cap PF(S)=I_2\cap PF(S),
\]
then $I_1=I_2$.
\end{lemma}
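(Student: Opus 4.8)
The plan is to show each containment $I_1 \subseteq I_2$ and $I_2 \subseteq I_1$ by a symmetric argument, exploiting the fact that a self-dual order ideal is completely controlled by the pseudo-Frobenius numbers it contains. The conceptual engine is that a self-dual order ideal is simultaneously upward closed (being an order ideal) and downward closed (by Lemma \ref{dual}), so every element sits below a maximal element that lies inside the ideal, and by Proposition \ref{maxElementsArePF} every maximal element of $(\M(S),\preccurlyeq)$ is a pseudo-Frobenius number. Thus the intersection with $PF(S)$ should already pin down the whole ideal.

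Concretely, I would fix an arbitrary $x \in I_1$ and produce a witnessing pseudo-Frobenius number. Since $(\M(S),\preccurlyeq)$ is a finite poset and $I_1$ is an order ideal, there is a maximal element $P$ of the poset with $x \preccurlyeq P$, and since $x \in I_1$ this forces $P \in I_1$. By Proposition \ref{maxElementsArePF}, $P \in PF(S) \setminus \{F(S)\}$, and because $I_1 \subseteq M(S)$ we have $P \in I_1 \cap PF(S)$. Now I invoke the hypothesis $I_1 \cap PF(S) = I_2 \cap PF(S)$ to conclude $P \in I_2$.

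The final step transfers membership back down from $P$ to $x$. Here I apply Lemma \ref{dual} to the self-dual order ideal $I_2$: since $P \in I_2$, $x \in M(S)$, and $x \preccurlyeq P$, the lemma yields $x \in I_2$. As $x$ was arbitrary this gives $I_1 \subseteq I_2$, and interchanging the roles of $I_1$ and $I_2$ gives the reverse inclusion, hence $I_1 = I_2$.

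The only point requiring care — and the main thing to get right — is the downward step, which secretly relies on the fact that self-dual order ideals are downward closed; without Lemma \ref{dual} one cannot descend from the maximal element $P$ back to $x$, and the statement would fail for general (non-self-dual) order ideals. Everything else is a routine application of Proposition \ref{maxElementsArePF} to identify the maximal elements with the nontrivial pseudo-Frobenius numbers, so I do not anticipate any computational difficulty.
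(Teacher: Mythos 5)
Your proof is correct and follows exactly the paper's argument: pick a maximal element $P$ above $x$, use the order-ideal property to place $P$ in $I_1\cap PF(S)=I_2\cap PF(S)$, and then descend via Lemma \ref{dual}. No differences worth noting.
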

\begin{proof}
Given $x\in I_1$, pick a maximal element $P$ above it, say $P\in PF(S)\setminus \{F(S)\}$ satisfies $x\preccurlyeq P$. Since $I_1$ is an order ideal, $P\in I_1$.  Therefore $P$ is in $I_1\cap PF(S)$ and so $P\in I_2$ also. By Lemma \ref{dual}, $x\in I_2$. This shows that $I_1\subseteq I_2$. By symmetry $I_1=I_2$.
\end{proof}

Recall that the pseudo-Frobenius graph of $S$ has vertex set $PF(S)\setminus\{F(S)\}$ and has an edge between $P$ and $Q$ when $P+Q-F(S)\in S$.
Note that there is an edge between $P$ and $Q$ if and only if $\overline{P}\preccurlyeq Q$.  This is  equivalent to $\overline{Q} \preccurlyeq P$.  Also note that $GPF(S)$ may possibly have loops. See Example \ref{eg 05->}.

\begin{theorem}\label{self-dual count}
Let $S$ be a numerical semigroup.  If $I$ is a self-dual order ideal of $(\M(S),\preccurlyeq)$ then $I\cap PF(S)$ is a union of connected components of $GPF(S)$.

Conversely, for any union of connected components of $GPF(S)$ there is a unique self-dual order ideal that contains precisely this set of pseudo-Frobenius numbers.
\end{theorem}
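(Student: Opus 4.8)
The plan is to prove the two directions separately, treating the forward implication as the routine one and concentrating effort on constructing the self-dual order ideal in the converse. Throughout I will use that, since $F(S)\notin M(S)$ and $I\subseteq M(S)$, we have $I\cap PF(S)=I\cap(PF(S)\setminus\{F(S)\})$, so this is a set of vertices of $GPF(S)$, and I will freely use the edge characterization recalled just before the theorem: there is an edge between $P$ and $Q$ exactly when $\overline{P}\preccurlyeq Q$.

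For the forward direction, I would show that $I\cap PF(S)$ is closed under adjacency in $GPF(S)$; since connected components are the equivalence classes of the adjacency relation, this forces $I\cap PF(S)$ to be a union of components. Concretely, suppose $P\in I\cap PF(S)$ and there is an edge between $P$ and some vertex $Q$, i.e.\ $\overline{P}\preccurlyeq Q$. Because $I$ is self-dual and $P\in I$, we have $\overline{P}\in I$; because $I$ is an order ideal and $\overline{P}\preccurlyeq Q$, we get $Q\in I$, and $Q\in PF(S)$ by construction. This is the entire argument for this direction.

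For the converse, uniqueness is immediate from Lemma \ref{determined}, so the real content is existence. Given a union $U$ of connected components of $GPF(S)$, I propose the candidate
\[
I=U\cup\{\,y\in M(S)\mid \overline{P}\preccurlyeq y\text{ for some }P\in U\,\}.
\]
The right-hand set is an order ideal by transitivity of $\preccurlyeq$, and each element of $U$ is maximal in $(\M(S),\preccurlyeq)$, so adjoining $U$ keeps $I$ an order ideal. Including $U$ explicitly is not a formality: an isolated vertex $P$ of $GPF(S)$ need not satisfy $\overline{P}\preccurlyeq P$, so it would otherwise be omitted.

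Finally I would verify self-duality and that $I$ recovers $U$. To apply Lemma \ref{check PF self-dual} it suffices to show $\overline{P}\in I$ for each $P\in PF(S)\cap I$. If $P\in PF(S)\cap I$ then either $P\in U$, or $\overline{P'}\preccurlyeq P$ for some $P'\in U$; the latter is an edge between $P$ and $P'$, which since $U$ is a union of components forces $P\in U$ as well. So $PF(S)\cap I=U$, and for any such $P$ we have $\overline{P}\in I$ because $\overline{P}\preccurlyeq\overline{P}$ (as $0\in S$) places $\overline{P}$ in the right-hand set with witness $P'=P$. Lemma \ref{check PF self-dual} then gives that $I$ is self-dual, and the computation $PF(S)\cap I=U$ is exactly the required statement that $I$ contains precisely the prescribed pseudo-Frobenius numbers. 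I expect the main obstacle to be pinning down the correct construction: the naive order ideal generated by $U$ together with its conjugates is neither self-dual nor returns $U$ on the nose, and the two delicate points are remembering to adjoin $U$ for isolated vertices and invoking that $U$ is a full union of components precisely at the step where an edge from $P'\in U$ to $P$ must force $P\in U$.
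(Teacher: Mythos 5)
Your proof is correct, and while the forward direction and the uniqueness claim coincide with the paper's (the paper phrases closure under adjacency as an induction along a path in $GPF(S)$, but it is the same computation, and uniqueness is Lemma~\ref{determined} in both), your construction in the existence half of the converse is genuinely different. For a union of components $U$, the paper takes the down-set $I=\{x\in M(S)\mid x\preccurlyeq P\text{ for some }P\in U\}$; since order ideals in this paper are \emph{upward} closed, the bulk of the paper's work goes into showing this down-set is nevertheless an order ideal, by passing from a relation $a\preccurlyeq b$ to a minimal element $\overline{Q}$ below $a$ and a maximal element $R$ above $b$ and chasing edges of $GPF(S)$. You instead take $U$ together with the up-set of $\{\overline{P}\mid P\in U\}$, which is an order ideal for free (upward closure is transitivity, and adjoining the maximal elements $U$ costs nothing by Proposition~\ref{maxElementsArePF}); the component structure of $GPF(S)$ is then invoked exactly once, to show $I\cap PF(S)=U$, after which Lemma~\ref{check PF self-dual} delivers self-duality. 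The two candidate sets coincide in the end --- yours is the conjugate of the paper's, and both turn out to be self-dual --- but your division of labor is cleaner, isolating the one nontrivial use of the component hypothesis. Your observation that $U$ must be adjoined explicitly to cover isolated loopless vertices is exactly the right subtlety; the paper's down-set absorbs it automatically because $P\preccurlyeq P$.
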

\begin{proof}
Suppose $C_1,\ldots, C_{\kappa}$ are the connected components of $GPF(S)$. Let $I$ be a self-dual order ideal of $(\M(S),\preccurlyeq)$. Suppose $P \in I \cap PF(S)$ and $P,Q \in C_i$ for some $i$. There is a path $P=P_0,P_1,P_2,\dots,P_{n-1},P_n=Q$ in $GPF(S)$ from $P$ to $Q$.  If $P_i\in I$, then since $I$ is self-dual, we have $\overline{P_i}\in I$. Moreover, we know that $\overline{P_i}\preccurlyeq P_{i+1}$ since there is an edge between $P_i$ and $P_{i+1}$ in $GPF(S)$. This implies $P_{i+1}\in I$. By induction, we conclude that $Q\in I$.
We have shown that if $I\cap C_i\neq\emptyset$, then $C_i \subseteq I$. This means that $I\cap PF(S)$ is a union of connected components of $GPF(S)$.

Conversely, let $J$ be a subset of $\{1,2,\dots,\kappa\}$. Let
\[
I=\{x\in M(S) \mid \exists\ i\in J \text{ and }\ P\in C_i \text{ with } x\preccurlyeq P\}.
\]
It is clear that $I\cap PF(S)= \bigcup_{i \in J}C_i$.  We now show that $I$ is an order ideal. Suppose $a\in I$, $b\in M(S)$ and $a\preccurlyeq b$. Since $a\in I$, we know that $\exists\ i\in J$, $P\in C_i$ such that $a\preccurlyeq P$. Also consider a minimal element below $a$. Such an element is of the form $\overline{Q}$ where $Q\in PF(S)$. Since $\overline{Q}\preccurlyeq a \preccurlyeq P$, there is an edge between $Q$ and $P$ in $GPF(S)$. In particular, $Q\in C_i$. Next, consider a maximal element above $b$, say $R\in PF(S)\setminus \{F(S)\}$ satisfies $b\preccurlyeq R$. Note that $\overline{Q}\preccurlyeq R$, which implies that there is an edge between $Q$ and $R$ in $GPF(S)$.  Therefore, $R\in C_i$, and since $b\preccurlyeq R$ we conclude that $b\in I$ and $I$ is an order ideal.

We now show that $I$ is self-dual. Let $a,P,Q$ be as above. Since $\overline{Q}\preccurlyeq a\preccurlyeq P$ we have $\overline{P}\preccurlyeq \overline{a}\preccurlyeq Q$. Since $Q\in C_i$ it follows that $\overline{a}\in I$. We conclude that $I$ is a self-dual order ideal with 
\[
I\cap PF(S)= \bigcup_{i \in J}C_i.
\]
Lemma \ref{determined} implies that $I$ is the unique self-dual order ideal with this set of pseudo-Frobenius numbers.
\end{proof}

\begin{corollary}\label{lbound}
We have $P(S)\geq 2^{\kappa(S)}$.
\end{corollary}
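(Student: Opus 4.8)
The plan is to count the self-dual order ideals of $(\M(S),\preccurlyeq)$ directly from the bijection established in Theorem \ref{self-dual count}, and then to convert each such ideal into a distinct numerical set associated to $S$ using Proposition \ref{sdideals}. Since the lower bound $P(S) \geq 2^{\kappa(S)}$ is an immediate counting consequence of the machinery already built, essentially all the real work has been done; this corollary is just the bookkeeping that packages it.

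First I would observe that Theorem \ref{self-dual count} amounts to a bijection between self-dual order ideals $I$ of $(\M(S),\preccurlyeq)$ and unions of connected components of $GPF(S)$, implemented by the map $I \mapsto I \cap PF(S)$. The forward direction of the theorem guarantees that this map lands among unions of connected components, the converse direction shows that every union of components arises from some self-dual order ideal, and Lemma \ref{determined} shows the map is injective, since two self-dual order ideals with the same intersection with $PF(S)$ must coincide. Because $GPF(S)$ has exactly $\kappa(S)$ connected components $C_1,\dots,C_{\kappa(S)}$, the unions of connected components are in bijection with subsets of $\{C_1,\dots,C_{\kappa(S)}\}$, of which there are $2^{\kappa(S)}$. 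Hence the number of self-dual order ideals of $(\M(S),\preccurlyeq)$ is exactly $2^{\kappa(S)}$.

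Next I would invoke Proposition \ref{sdideals}, which tells us that for each self-dual order ideal $I$ the set $T = I \cup S$ is a numerical set associated to $S$. To see that distinct ideals produce distinct numerical sets, note that $I \subseteq M(S)$ and $M(S)$ is disjoint from $S$ by definition, so $I$ is recovered from $T$ as $I = T \setminus S$. Thus the assignment $I \mapsto I \cup S$ is itself injective, and $P(S)$, the total number of numerical sets associated to $S$, is at least the number of self-dual order ideals, namely $2^{\kappa(S)}$.

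I do not anticipate any genuine obstacle here: the only points requiring a moment of care are that distinct self-dual order ideals give distinct numerical sets (which follows from the disjointness of $M(S)$ and $S$), and that the count of unions of connected components is precisely $2^{\kappa(S)}$ because each of the $\kappa(S)$ components may be independently included or excluded. Both are routine, and the substantive content lives entirely in Theorem \ref{self-dual count}.
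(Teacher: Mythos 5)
Your argument is correct and is exactly the reasoning the paper intends: the corollary is stated without proof because it follows immediately from Theorem \ref{self-dual count} (which gives $2^{\kappa(S)}$ self-dual order ideals) together with Proposition \ref{sdideals} (which turns each into a distinct associated numerical set). Your extra remarks on injectivity of $I\mapsto I\cup S$ and the count of unions of components are the right bookkeeping and match the paper's approach.
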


\begin{example}\label{eg 05->}
Consider $S=\{0,5\rightarrow\}$, so $F(S)=4$ and $M(S)=\{1,2,3\}$. The poset $(\M(S),\preccurlyeq)$ has no nontrivial relations. Therefore $PF(S)=\{1,2,3,4\}$ and $S$ is almost symmetric. See Figure \ref{fig 05} for the poset $(\M(S),\preccurlyeq)$ and the graph $GPF(S)$. The graph $GPF(S)$ has $2$ connected components. The self-dual order ideals of $(\M(S),\preccurlyeq)$ give $4$ numerical sets associated to $S$: $T_1=S$, $T_2=\{1,3\}\cup S$, $T_3=\{2\}\cup S$ and $T_4=\{1,2,3\}\cup S=S^*$. There are two more numerical sets associated to $S$, which come from order ideals that are not self-dual: $T_5=\{1\}\cup S$, $T_6=\{1,2\}\cup S$. We have $P(S)=6$.

\begin{figure}[!tbp]
  \begin{subfigure}[b]{0.3\textwidth}
    \begin{tikzpicture}
    \node (1) at (0,0) {$1$};
    \node (2) at (1,0) {$2$};
    \node (3) at (2,0) {$3$};
    \end{tikzpicture}
    \caption{Void poset}
    \label{fig:f1}
  \end{subfigure}
  \begin{subfigure}[b]{0.3\textwidth}
    \begin{tikzpicture}[main/.style = {draw, circle}] 
    \node[main] (1) at (-1,0) {$1$};
    \node[main] (2) at (0,0) {$3$};
    \node[main] (3) at (1,0) {$2$};
    \draw (3) to [out=45,in=135,looseness=4] (3);
    \draw (1) -- (2);
    \end{tikzpicture}
    \caption{$GPF$ graph}
    \label{fig:f2}
  \end{subfigure}
  \caption{Void poset and $GPF$ graph of $\{0,5\rightarrow\}$}
  \label{fig 05}
\end{figure}
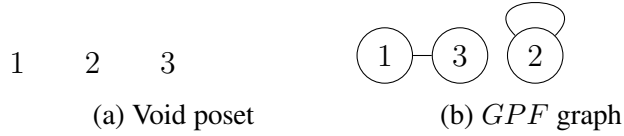

\end{example}

Next we give a complete characterization of the order ideals that lead to numerical sets associated to $S$. Given $P\in PF(S)\setminus\{F(S)\}$, define
\[
Tr_P(S)=\{(P,x,y) \mid x,y\in M(S) \text{ with } P+x+y=F(S)\} \subset Tr(S).
\]
Note that given $P\in PF(S) \setminus\{F(S)\}$ and $x,y\in M(S)$,
$(P,x,y)$ is a Frobenius triangle if and only if $P+x=\overline{y}$.
Recall that given an order ideal $I$ and a Frobenius triangle $(P,x,y)$, $I$ satisfies the Frobenius triangle $(P,x,y)$ if $P,x\in I$ and $\overline{y}\notin I$.

\begin{theorem}\label{Characterisation}
Let $S$ be a numerical semigroup, $I \subseteq M(S)$, and $T=I\cup S$. Then $T$ is a numerical set associated to $S$ if and only if
\begin{enumerate}
\item $I$ is an order ideal of $(\M(S),\preccurlyeq)$, and 
\item for each $P \in I \cap PF(S)$, one of the following conditions is satisfied:\ $2P \not\in S$, $F(S)-P \in I$, or there is a Frobenius triangle $(P,x,y)$ that is satisfied by $I$.
\end{enumerate}
Moreover, if $I\cup S$ is a numerical set associated to $S$, then for all $P\in I\cap PF(S)$ 
either $\overline{P} \in I$ or there is a Frobenius triangle $(P,x,y)$ that is satisfied by~$I$.
\end{theorem}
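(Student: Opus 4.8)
The plan is to prove the biconditional by treating the two directions separately and to extract the ``Moreover'' clause as a byproduct of the forward direction. Throughout write $F = F(S)$, and recall from Proposition \ref{posetlarger} that condition (1) is exactly equivalent to $S \subseteq A(T)$; so in both directions I may assume $I$ is an order ideal and concentrate on whether $A(T) \subseteq S$. The key preliminary observation I would record is that for $P \in PF(S)$ no element of $S$ can witness $P \notin A(T)$: if $t \in S\setminus\{0\}$ then $P + t \in S \subseteq T$, while $P + 0 = P \in I \subseteq T$. Hence $P \notin A(T)$ if and only if there is some $t \in I$ with $P + t \notin T$, in which case $P + t \in \h(S)\setminus I$.

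For the forward direction, assume $A(T) = S$. Condition (1) is immediate from Proposition \ref{posetlarger}. Fix $P \in I \cap PF(S)$; since $P \in \h(S)$ we have $P \notin A(T)$, so by the preliminary observation there is $t \in I$ with $P + t \in \h(S)\setminus I$. I would then split on where $P + t$ lands. If $P + t \in \h(S)\setminus M(S)$ then $F - (P+t) \in S$, that is $t \preccurlyeq \overline{P}$, and since $I$ is an order ideal this forces $\overline{P} \in I$. If instead $P + t \in M(S)\setminus I$, then $(P, t, \overline{P+t})$ is a Frobenius triangle satisfied by $I$, because $P, t \in I$ while $\overline{\,\overline{P+t}\,} = P+t \notin I$. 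In either case ``$\overline{P}\in I$'' or ``some Frobenius triangle $(P,x,y)$ is satisfied by $I$'' holds; this is precisely the ``Moreover'' statement, and it immediately yields condition (2).

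For the backward direction, assume (1) and (2); by Proposition \ref{posetlarger} we already have $S \subseteq A(T)$, and I must rule out $A(T) \supsetneq S$. The step I expect to be the main obstacle is that one cannot argue term by term: conditions (b) (namely $\overline{P}\in I$) and (c) each force $P \notin A(T)$, but condition (a), $2P\notin S$, does \emph{not} by itself, so the alternatives of (2) do not match individual $P$. Instead I would argue by contradiction with an extremal element. Suppose $A(T) \neq S$. Since $A(T) \subseteq T = S \cup I$ with $I \subseteq M(S)$, the set $A(T)\cap M(S)$ is nonempty; let $Q$ be its element of largest value. As $A(T)$ is a semigroup, $2Q \in A(T) \subseteq T$; if $2Q \notin S$ then $2Q \in I \subseteq M(S)$ would be an element of $A(T)\cap M(S)$ of strictly larger value than $Q$, a contradiction, so $2Q \in S$ and alternative (a) fails at $Q$. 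Next, $Q$ is maximal in $(\M(S),\preccurlyeq)$: any $Q' \succ Q$ in $M(S)$ satisfies $Q'-Q \in S \subseteq A(T)$, hence $Q' \in A(T)\cap M(S)$ with larger value, contradicting the choice of $Q$; by Proposition \ref{maxElementsArePF} this gives $Q \in PF(S)\setminus\{F\}$, and $Q \in I$ since $Q \notin S$. Finally, because $Q \in A(T)$, neither (b) nor (c) can hold at $Q$: if $\overline{Q}\in I$ then $Q + \overline{Q} = F \in T$, and if a triangle $(Q,x,y)$ were satisfied then $x \in I$ and $Q + x = \overline{y} \in M(S)\setminus I$ would give $Q + x \notin T$, each contradicting $Q \in A(T)$. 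Thus all three alternatives of condition (2) fail at $Q \in I \cap PF(S)$, contradicting (2); therefore $A(T) = S$, which completes the proof.
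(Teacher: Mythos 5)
Your proposal is correct and takes essentially the same approach as the paper: Proposition \ref{posetlarger} reduces condition (1) to $S \subseteq A(T)$, the forward direction analyzes a witness $t \in I$ with $P+t \notin T$ (which yields exactly the ``Moreover'' clause), and the backward direction derives a contradiction at the numerically largest element of $A(T)\setminus S$, where $2Q \in S$ and alternatives (b), (c) each contradict $Q \in A(T)$. The only cosmetic differences are that in the forward direction the paper pins down $t=\overline{P}$ exactly (showing $F-P-t \in S\setminus\{0\}$ is impossible) where you invoke order-ideal closure, and in the backward direction you deduce $Q \in PF(S)\setminus\{F(S)\}$ via poset-maximality and Proposition \ref{maxElementsArePF} where the paper verifies $Q+s \in S$ for $s \in S\setminus\{0\}$ directly.
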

\begin{proof}
If $T$ is a numerical set associated to $S$, then Proposition \ref{posetlarger} implies that $I$ is an order ideal of $(\M(S),\preccurlyeq)$.  Suppose $P\in I\cap PF(S)$. We know that $P\not\in A(T)=S$, that is, $P+T\not\subseteq T$. This means that there is some $x\in T$ for which $P+x\not\in T$. Since $S=A(T)$, we know that $x\notin S$ and hence $x\in I$.
\begin{itemize}[wide, labelindent=0pt] 
    \item Case 1: Suppose $P+x\in M(S)\setminus T$. Then $\left(P,x,\overline{P+x}\right)$ is a Frobenius triangle and $I$ satisfies it.
    \item Case 2: Suppose $P+x\in \h(S)\setminus M(S)$. This means that $F-P-x\in S$. Now suppose $F-P-x\neq 0$. Then since $P\in PF(S)$, we see that
    \[
    F-x=P+(F-P-x)\in S.
    \]
    This contradicts the fact that $x\in I\subseteq M(S)$. Therefore $F-P-x=0$, that is, $\overline{P}=x\in I$.
\end{itemize}

Next we prove the other direction. Suppose that $I$ satisfies (1) and (2) in the theorem statement. By Proposition \ref{posetlarger}, $S\subseteq A(T)$. Assume for the sake of contradiction that $S\subsetneq A(T)$. Let $P=\max(A(T)\setminus S)$. Since $F(S) \notin T$, we know that $P\neq F(S)$. Given $s\in S\setminus\{0\}$, we know that $P+s\in A(T)$ as $A(T)$ is closed under addition. Since $P=\max(A(T)\setminus S)$ it follows that $P+s\in S$. This means that $P\in I\cap PF(S)$. Similarly $2P\in A(T)$, and since $2P>P$, we conclude that $2P\in S$. By (2) we know that either $\overline{P}\in I$ or there is some Frobenius triangle that $I$ satisfies.
\begin{itemize}[wide, labelindent=0pt] 
    \item Case 1: Suppose $\overline{P}\in I$. Then $P+\overline{P}=F\not\in T$. This contradicts the fact that $P\in A(T)$.
    \item Case 2: Suppose that $I$ satisfies the Frobenius triangle $(P,x,y)$. This means that $x\in I$ and $P+x=\overline{y}\notin I$. This again contradicts the fact that $P\in A(T)$.
\end{itemize}
We get a contradiction in both cases.  Therefore, $S=A(T)$, that is, $T$ is a numerical set associated to $S$.
\end{proof}

A numerical semigroup $S$ is called \emph{P-minimal} if $P(S)=2^{\kappa(S)}$.
\begin{corollary}\label{P(S) no triangle}
If $S$ is a numerical semigroup for which $Tr(S)=\emptyset$, then $S$ is P-minimal.
\end{corollary}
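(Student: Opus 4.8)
The plan is to establish the reverse of the inequality in Corollary \ref{lbound}. Since that corollary already gives $P(S) \geq 2^{\kappa(S)}$, it suffices to show $P(S) \leq 2^{\kappa(S)}$, and in fact I will argue that under the hypothesis $Tr(S) = \emptyset$ the numerical sets associated to $S$ are exactly those of the form $I \cup S$ with $I$ a self-dual order ideal. The count $2^{\kappa(S)}$ then follows immediately from Theorem \ref{self-dual count}.

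First I would take an arbitrary numerical set $T$ associated to $S$. By Proposition \ref{posetlarger} we may write $T = I \cup S$ for an order ideal $I$ of $(\M(S),\preccurlyeq)$. The key step is to invoke the ``moreover'' clause of Theorem \ref{Characterisation}: for each $P \in I \cap PF(S)$, either $\overline{P} \in I$ or there is a Frobenius triangle $(P,x,y)$ that is satisfied by $I$. Because $Tr(S) = \emptyset$ by hypothesis, the second alternative cannot occur, so we are forced to conclude $\overline{P} \in I$ for every $P \in I \cap PF(S)$. Lemma \ref{check PF self-dual} then applies verbatim and shows that $I$ is self-dual. Thus every order ideal arising from a numerical set associated to $S$ is self-dual.

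Conversely, Proposition \ref{sdideals} guarantees that every self-dual order ideal $I$ yields a numerical set $I \cup S$ associated to $S$. Combining the two directions, the numerical sets associated to $S$ are in bijection with the self-dual order ideals of $(\M(S),\preccurlyeq)$. Finally, Theorem \ref{self-dual count} sets up a bijection between self-dual order ideals and unions of connected components of $GPF(S)$; there are exactly $2^{\kappa(S)}$ such unions. Hence $P(S) = 2^{\kappa(S)}$, which is the definition of $S$ being P-minimal.

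I do not expect a genuine obstacle here, as the heavy lifting is done by the previously established results; the argument is essentially an orchestration of Theorem \ref{Characterisation}, Lemma \ref{check PF self-dual}, Proposition \ref{sdideals}, and Theorem \ref{self-dual count}. The one point that warrants care is making sure the ``moreover'' clause of Theorem \ref{Characterisation} is quoted correctly so that the vanishing of $Tr(S)$ really does eliminate the Frobenius-triangle alternative and leave only the condition $\overline{P} \in I$; this is precisely the hypothesis needed to trigger Lemma \ref{check PF self-dual}.
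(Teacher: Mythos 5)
Your proof is correct and follows exactly the intended route: the paper gives no explicit proof of this corollary, but it is meant to follow from the ``moreover'' clause of Theorem \ref{Characterisation} together with Lemma \ref{check PF self-dual}, Proposition \ref{sdideals}, and Theorem \ref{self-dual count}, precisely as you have orchestrated them.
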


Note that the converse of Corollary \ref{P(S) no triangle} is not true. For example, for $S=\langle 8, 12, 13, 23, 30\rangle$ we have $\kappa(S)=1$ and $P(S)=2$, so $S$ is P-minimal. However, $(17,5,5)\in Tr(S)$, so $Tr(S)\neq\emptyset$.

\section{Structure among Frobenius triangles}

This section consists of some technical lemmas concerning Frobenius triangles. These will be useful in computing $P(S)$ for certain classes of numerical semigroups. If $P,Q\in PF(S)\setminus\{F(S)\}$ and $P-Q\in M(S)$, then $(Q,P-Q,F(S)-P)$ is a Frobenius triangle in $Tr_Q(S)$. Our main result in this section is the following.
\begin{proposition}\label{condition to Frobenius triangle}
Let $Q\in PF(S)\setminus\{F(S)\}$. We have $Tr_Q(S)\neq\emptyset$ if and only if $\exists\ P\in PF(S)\setminus\{F(S)\}$ such that $P-Q\in M(S)$.
\end{proposition}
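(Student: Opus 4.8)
The plan is to prove the two implications separately, noting at the outset that the backward direction is essentially already recorded in the remark preceding the statement: if $P \in PF(S) \setminus \{F(S)\}$ satisfies $P - Q \in M(S)$, then $(Q,\, P-Q,\, F(S)-P)$ is a Frobenius triangle in $Tr_Q(S)$. The only thing to check is that $F(S)-P \in M(S)$, and this follows because Proposition \ref{maxElementsArePF} places $P$ in $M(S)$ and $M(S)$ is closed under conjugation $x \mapsto \overline{x}$. Since $Q \in PF(S)\setminus\{F(S)\}$ by hypothesis, the remark applies verbatim and gives $Tr_Q(S) \neq \emptyset$. All of the real work lies in the forward direction.

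For the forward direction I would start from an arbitrary triangle $(Q,x,y) \in Tr_Q(S)$, so $x,y \in M(S)$ and $Q+x+y = F(S)$. The key move is to rewrite $Q+x = \overline{y}$, which lies in $M(S)$, and then to \emph{promote} $\overline{y}$ to a maximal element of the void poset: since the poset is finite, choose $P$ with $\overline{y} \preccurlyeq P$. By Proposition \ref{maxElementsArePF}, any such maximal $P$ is a pseudo-Frobenius number other than $F(S)$, so $P$ is my candidate. From $\overline{y} \preccurlyeq P$ I obtain $P - Q - x = P - \overline{y} \in S$, and since $x \geq 1$ this already shows $P - Q \geq 1$ is a positive integer.

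The heart of the argument is then to verify $P - Q \in M(S)$, that is, that neither $P-Q$ nor $F(S)-(P-Q)$ lies in $S$. That $P - Q \notin S$ is the easier half: if $P-Q \in S$, then $P-Q \in S \setminus \{0\}$ together with $Q \in PF(S)$ would force $P = Q + (P-Q) \in S$, contradicting $P \in \h(S)$. The harder half — and what I expect to be the main obstacle — is showing $F(S) - (P-Q) \notin S$. Here I would exploit self-duality (Lemma \ref{selfdual}): conjugating $\overline{y} \preccurlyeq P$ yields $\overline{P} \preccurlyeq y$, so $t := y - \overline{P} \in S$. Substituting $Q = F(S)-x-y$ and eliminating $y$ using $t$, a short computation collapses $F(S)-(P-Q)$ to $\overline{x} - t$. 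Were this in $S$, then $\overline{x} = (\overline{x}-t) + t$ would be a sum of two elements of $S$, hence in $S$, contradicting $\overline{x} \in M(S) \subseteq \h(S)$. This forces $F(S)-(P-Q) \notin S$.

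The only subtlety to watch is the bookkeeping with conjugation and sign conventions, together with confirming that $P-Q$ is genuinely nonzero before invoking the pseudo-Frobenius property of $Q$; as noted, $P - \overline{y} \in S$ and $x \geq 1$ make this automatic. Once both non-membership facts are in hand, $P - Q \in M(S)$ by definition, and $P \in PF(S)\setminus\{F(S)\}$ is precisely the element whose existence is claimed.
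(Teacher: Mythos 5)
Your proof is correct and follows essentially the same route as the paper: the backward direction is the same explicit triangle $(Q,\,P-Q,\,F(S)-P)$, and your forward direction chooses a maximal $P$ above $\overline{y}$ (equivalently, a minimal $\overline{P}$ below $y$) and then reproves inline exactly the content of Lemma~\ref{superlemma}, with the same computation showing $F(S)-(P-Q)\in S$ would force $\overline{x}\in S$. The only cosmetic difference is that you handle the case $P-Q\in S$ slightly more directly via the pseudo-Frobenius property of $Q$ (after noting $P-Q\ge x\ge 1$), whereas the paper cites the lemma wholesale.
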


Before proving this statement we need some preliminary results.
\begin{lemma}\label{largest PF}
Let $P=\max_{<}(M(S))$. We have $P\in PF(S)\setminus\{F(S)\}$ and $Tr_P(S)=\emptyset$.
\end{lemma}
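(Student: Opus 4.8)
The plan is to prove the two assertions separately, relying on Proposition \ref{maxElementsArePF} for the membership claim and on the Frobenius-triangle reformulation $P+x=\overline{y}$ for the emptiness claim. Throughout I write $F=F(S)$, and I note that since $\max_{<}(M(S))$ is referenced, we are implicitly in the case $M(S)\neq\emptyset$.

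First I would show $P\in PF(S)\setminus\{F\}$. The key observation is that the numerical maximum of $M(S)$ is automatically a maximal element of the poset $(\M(S),\preccurlyeq)$: if some $y\in M(S)$ satisfied $P\prec y$, then $y-P\in S\setminus\{0\}$ would force $y>P$, contradicting the choice of $P$. Hence $P$ is maximal in $(\M(S),\preccurlyeq)$, and Proposition \ref{maxElementsArePF} identifies the set of maximal elements as exactly $PF(S)\setminus\{F\}$, giving $P\in PF(S)\setminus\{F\}$. (One also sees directly that $P\neq F$: since $P\in M(S)$ we have $F-P\notin S$, whereas $F-F=0\in S$.)

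Next I would show $Tr_P(S)=\emptyset$ by contradiction. Suppose $(P,x,y)\in Tr_P(S)$, so $x,y\in M(S)$ and $P+x+y=F$. Rewriting this as $P+x=F-y=\overline{y}$, and recalling that conjugation maps $M(S)$ to itself (Lemma \ref{selfdual}), we get $\overline{y}\in M(S)$. But $x\in M(S)\subseteq\h(S)$ is a positive gap, so $\overline{y}=P+x\geq P+1>P$. This produces an element of $M(S)$ strictly larger than $P=\max_{<}(M(S))$, a contradiction. Therefore no Frobenius triangle with first coordinate $P$ exists.

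The argument is short, and I do not anticipate a serious obstacle; the only points requiring care are keeping the numerical order $<$ distinct from the poset order $\preccurlyeq$ (the whole proof hinges on translating between the two) and confirming at each step that the relevant elements genuinely lie in $M(S)$ rather than merely in $\h(S)$. The reformulation $P+x=\overline{y}$ recorded just before Theorem \ref{Characterisation} is exactly what makes the maximality contradiction immediate.
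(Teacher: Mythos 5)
Your proposal is correct and follows essentially the same route as the paper: the numerical maximum of $M(S)$ is a maximal element of $(\M(S),\preccurlyeq)$, hence a pseudo-Frobenius number by Proposition~\ref{maxElementsArePF}, and a Frobenius triangle $(P,x,y)$ is ruled out by an extremality contradiction. The paper phrases that contradiction via $\min_{<}(M(S))=F(S)-P$ while you phrase it via $\overline{y}=P+x>P=\max_{<}(M(S))$; these are conjugate forms of the same one-line argument.
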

\begin{proof}
If $P=\max_{<}(M(S))$, then $P$ is a maximal element of $(\M(S),\preccurlyeq)$. Therefore, $P\in PF(S)\setminus\{F(S)\}$. Moreover, $\min_{<}(M(S))=F(S)-P$, so $Tr_P(S)=\emptyset$.
\end{proof}

\begin{lemma}\label{triless}
Suppose $Q\in PF(S)\setminus\{F(S)\}$ and $(Q,x,y)\in Tr_Q(S)$. 
\begin{enumerate}
\item If $a \in M(S)$ satisfies $a\prec x$, then $a\preccurlyeq \overline{y}$.
\item If $b \in M(S)$ satisfies $\overline{y}\prec b$, then $x\preccurlyeq b$.
\end{enumerate}
\end{lemma}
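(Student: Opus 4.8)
The plan is to reduce both parts to the single identity $\overline{y}=Q+x$, which is exactly the reformulation of $(Q,x,y)\in Tr_Q(S)$ already recorded in the excerpt (the condition $Q+x+y=F(S)$ is equivalent to $Q+x=\overline{y}$). The only other ingredient I need is the defining property of a pseudo-Frobenius number, namely that $Q\in PF(S)$ implies $Q+s\in S$ for every $s\in S\setminus\{0\}$. Everything then collapses to two one-line computations.

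For part (1) I would start from the hypothesis $a\prec x$, which by definition means $x-a\in S\setminus\{0\}$, and aim to produce $\overline{y}-a\in S$ (the meaning of $a\preccurlyeq\overline{y}$). Substituting $\overline{y}=Q+x$ gives
\[
\overline{y}-a=Q+x-a=Q+(x-a).
\]
Since $x-a$ is a nonzero element of $S$ and $Q\in PF(S)$, the pseudo-Frobenius property yields $Q+(x-a)\in S$, so $\overline{y}-a\in S$ and hence $a\preccurlyeq\overline{y}$.

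For part (2) I would argue symmetrically. From $\overline{y}\prec b$ we get $b-\overline{y}\in S\setminus\{0\}$, and the goal $x\preccurlyeq b$ asks for $b-x\in S$. Using $\overline{y}-x=F(S)-y-x=Q$, I write
\[
b-x=(b-\overline{y})+(\overline{y}-x)=(b-\overline{y})+Q,
\]
which lies in $S$ by the pseudo-Frobenius property applied to the nonzero element $b-\overline{y}$. (Alternatively, part (2) is the conjugate of part (1): the substitution $b\mapsto\overline{b}$ converts $\overline{y}\prec b$ into $\overline{b}\prec y$, and applying part (1) to the swapped triangle $(Q,y,x)$ gives $\overline{b}\preccurlyeq\overline{x}$, which conjugates back to $x\preccurlyeq b$ since conjugation reverses $\preccurlyeq$ by Lemma~\ref{selfdual}.)

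I do not expect any real obstacle here; once the identity $\overline{y}=Q+x$ is in place, both claims are immediate from $Q\in PF(S)$. The only subtlety to watch is that I must genuinely use the strict relations $a\prec x$ and $\overline{y}\prec b$ in the hypotheses: these guarantee that the differences $x-a$ and $b-\overline{y}$ are \emph{nonzero}, which is precisely what allows the pseudo-Frobenius condition (which excludes $0$) to apply.
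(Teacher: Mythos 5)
Your proof is correct and follows essentially the same route as the paper: both reduce to the identity $\overline{y}-x=Q$ and then apply the pseudo-Frobenius property of $Q$ to the nonzero differences $x-a$ and $b-\overline{y}$. Your explicit remark about why strictness of $\prec$ is needed is a nice touch but does not change the argument.
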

\begin{proof}
We know that $Q+x+y=F(S)$, that is, $\overline{y}-x=Q$. If $x-a \in S \setminus \{0\}$, we have
\[
\overline{y}-a=(\overline{y}-x)+(x-a)=Q+(x-a)\in S,
\]
since $Q\in PF(S)$.  If $b-\overline{y} \in S \setminus \{0\}$, we have
\[
b-x=(b-\overline{y})+(\overline{y}-x)=Q+(b-\overline{y})\in S.
\]
\end{proof}

\begin{corollary}\label{PFtriless}
If $P,Q\in PF(S)\setminus\{F(S)\}$, $a\in M(S)$, and $P-Q\in M(S)$, then $a\prec P-Q$ implies $a\prec P$.
\end{corollary}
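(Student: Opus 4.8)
The plan is to observe that the hypotheses hand us a ready-made Frobenius triangle to which Lemma \ref{triless} applies directly. Since $P,Q\in PF(S)\setminus\{F(S)\}$ and $P-Q\in M(S)$, the remark preceding Proposition \ref{condition to Frobenius triangle} tells us that $(Q,\,P-Q,\,F(S)-P)\in Tr_Q(S)$. In the notation of Lemma \ref{triless} this is the triangle $(Q,x,y)$ with $x=P-Q$ and $y=F(S)-P$, so that $\overline{y}=F(S)-y=P$. Getting this bookkeeping right, in particular spotting that $\overline{y}=P$, is really the only conceptual content of the argument; everything after it is mechanical.

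With this identification in hand, Lemma \ref{triless}(1) does the work. That lemma states that $a\prec x$ implies $a\preccurlyeq\overline{y}$. Substituting $x=P-Q$ and $\overline{y}=P$, the hypothesis $a\prec P-Q$ immediately yields $a\preccurlyeq P$.

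What remains is to upgrade this weak relation to the strict relation $a\prec P$, i.e.\ to exclude the possibility $a=P$. This is the only place requiring a moment of care, though it is not a genuine obstacle: if $a=P$, then $a\prec P-Q$ would read $P\prec P-Q$, forcing $(P-Q)-P=-Q\in S\setminus\{0\}$, which is impossible since $Q>0$ and $S\subseteq\N_0$. Hence $a\neq P$, and combining this with $a\preccurlyeq P$ gives $a\prec P$. In summary, the corollary follows by applying part (1) of Lemma \ref{triless} to the canonical triangle $(Q,P-Q,F(S)-P)\in Tr_Q(S)$, together with a one-line positivity check to secure the strict inequality.
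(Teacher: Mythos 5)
Your proof is correct and follows the same route as the paper, whose entire proof is the one-line observation that $(Q,P-Q,F(S)-P)\in Tr_Q(S)$, with Lemma \ref{triless}(1) then applied implicitly. Your additional check that $a\neq P$ (to upgrade $a\preccurlyeq P$ to $a\prec P$) is a detail the paper glosses over, and it is handled correctly.
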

\begin{proof}
Note that $(Q,P-Q, F(S)-P)\in Tr_Q(S)$.
\end{proof}

\begin{lemma}\label{l:abovebelowtriangle}
Suppose $I$ is an order ideal of $(\M(S),\preccurlyeq)$ and $(Q,x,y)\in Tr_Q(S)$ is satisfied by $I$.  Then $x$ is a minimal element of $I$ and $\overline{y}$ is a maximal element of $M(S) \setminus I$.
\end{lemma}
\begin{proof}
By assumption, $Q,x\in I$ and $\overline{y}\notin I$. If $a \in M(S)$ satisfies $a \prec x$, then Lemma \ref{triless} implies that $a\preccurlyeq \overline{y}$. Therefore, $a \not\in I$.  If $b \in M(S)$ satisfies $\overline{y} \prec b$, then Lemma \ref{triless} implies that $x\preccurlyeq b$.  Therefore, $b \in I$.
\end{proof}

\begin{lemma}\label{superlemma}
Suppose $Q\in PF(S)\setminus\{F(S)\}$ and $(Q,x,y)\in Tr_Q(S)$. If $\overline{P}\preccurlyeq y$ for some $P\in PF(S)\setminus\{F(S)\}$, then $P-Q\in M(S)$ and $x\preccurlyeq P-Q$.
\end{lemma}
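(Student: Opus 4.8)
The plan is to convert the poset hypothesis $\overline{P}\preccurlyeq y$ into a plain additive identity, substitute it into the defining equation $Q+x+y=F(S)$ of the Frobenius triangle, and thereby express $P-Q$ in a form from which all of the required conclusions fall out. Write $F=F(S)$ throughout.

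First I would unwind the hypothesis. Since $\overline{P}=F-P$ and, by definition of the void poset, $\overline{P}\preccurlyeq y$ means $y-\overline{P}\in S$, there is some $s\in S$ with $y=\overline{P}+s=F-P+s$. Substituting this into $Q+x+y=F$ gives $Q+x+F-P+s=F$, which simplifies to
\[
P-Q=x+s,\qquad s\in S.
\]
This single identity is the engine of the argument: it exhibits $P-Q$ as the sum of $x\in M(S)$ and an element $s$ of $S$, and it will deliver both $P-Q\in M(S)$ and the relation $x\preccurlyeq P-Q$.

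Next I would verify the two defining conditions for $P-Q\in M(S)$, together with positivity. Positivity is immediate, since $x\in M(S)$ forces $x>0$ (as $0\in S$) and $s\ge 0$, so $P-Q=x+s>0$. For $P-Q\notin S$: if $P-Q\in S$, then $P-Q\in S\setminus\{0\}$, so $P=Q+(P-Q)\in S$ because $Q\in PF(S)$, contradicting $P\in\h(S)$. For $F-(P-Q)\notin S$: I would rewrite $F-(P-Q)=\overline{x}-s$ using the boxed identity; were this in $S$, then $\overline{x}=(\overline{x}-s)+s$ would be a sum of two elements of $S$, hence in $S$, contradicting $\overline{x}\notin S$ (which holds because $x\in M(S)$). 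With both conditions checked, $P-Q\in M(S)$, and then $(P-Q)-x=s\in S$ is exactly the statement $x\preccurlyeq P-Q$.

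The only genuinely delicate point is the verification that $F-(P-Q)\notin S$; everything else is routine use of $Q\in PF(S)$ and the membership $x\in M(S)$. The main obstacle is spotting the rewriting $F-(P-Q)=\overline{x}-s$, after which closure of $S$ under addition finishes it in one line. I expect no difficulty with the remaining steps once the substitution $y=\overline{P}+s$ has been made.
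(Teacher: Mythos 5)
Your proof is correct and follows essentially the same route as the paper's: both arguments reduce to checking that $P-Q\notin S$ and $F(S)-(P-Q)\notin S$, and then conclude with $(P-Q)-x=y-\overline{P}\in S$. The only cosmetic difference is that you rule out $P-Q\in S$ via the pseudo-Frobenius property of $Q$ (after noting $P-Q=x+s>0$), whereas the paper invokes maximality in the void poset; your organizing identity $P-Q=x+s$ makes the remaining check $F(S)-(P-Q)=\overline{x}-s\notin S$ literally the same computation as the paper's.
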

\begin{proof}
We want to show that $P-Q\in M(S)$. If this is not the case then either $P-Q\in S$ or $F(S)-(P-Q)\in S$.
\begin{itemize}[wide, labelindent=0pt] 
    \item Suppose $P-Q\in S$. Then $P\preccurlyeq Q$ in $(\M(S),\preccurlyeq)$. Since $P$ is a maximal element of $(\M(S),\preccurlyeq)$ we see that $P=Q$. Since $\overline{P}\preccurlyeq y$ we have $y-(F(S)-P)\in S$. But $y+P-F(S)=x$, which contradicts the fact that $x\in M(S)$.
    \item Suppose $F(S)-(P-Q)\in S$. Since $y-\overline{P}=y-F(S)+P\in S$, we have
    \[
    (F(S)-P+Q)+(y-F(S)+P)= Q+y\in S.
    \]
    But $Q+y=F(S)-x$, which contradicts the fact that $x\in M(S)$.
\end{itemize}
We conclude that $P-Q\in M(S)$. Finally,
\[
(P-Q)-x=P-(Q+x)=P-(F(S)-y)=y-\overline{P}\in S,
\]
and so $x\preccurlyeq P-Q$.
\end{proof}

\begin{proof}[Proof of Proposition \ref{condition to Frobenius triangle}]
If $(Q,x,y)\in Tr_Q(S)$ then $y$ would be above some minimal element of $(\M(S),\preccurlyeq)$, say $P\in PF(S) \setminus \{F(S)\}$ satisfies $\overline{P}\preccurlyeq y$. Lemma \ref{superlemma} implies that $P-Q\in M(S)$.

Conversely, if there is a $P\in PF(S)\setminus\{F(S)\}$ such that $P-Q\in M(S)$, then $(Q,P-Q,F(S)-P)\in Tr_Q(S)$.
\end{proof}

\begin{defn}
A numerical semigroup $S$ is \emph{triangle-free} if whenever $P_1,P_2 \in PF(S)$ satisfy $P_1-P_2 \in M(S)$, then $P_1=F(S)$.
\end{defn}

Proposition \ref{condition to Frobenius triangle} implies that $Tr(S)=\emptyset$ if and only if $S$ is triangle-free.

\begin{proposition}\label{none P-Q}
If $S$ is triangle-free, then it is P-minimal.
\end{proposition}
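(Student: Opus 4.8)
The plan is to connect the two characterizations that have already been established: Corollary \ref{P(S) no triangle}, which says $Tr(S) = \emptyset$ implies P-minimality, and the remark immediately following the definition of triangle-free, which says $Tr(S) = \emptyset$ if and only if $S$ is triangle-free. Chaining these two equivalences gives the result almost immediately.

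Concretely, I would argue as follows. Suppose $S$ is triangle-free. By the remark following the triangle-free definition (itself a direct consequence of Proposition \ref{condition to Frobenius triangle}), being triangle-free is equivalent to $Tr(S) = \emptyset$. Then Corollary \ref{P(S) no triangle} applies directly: since $Tr(S) = \emptyset$, the semigroup $S$ is P-minimal, i.e.\ $P(S) = 2^{\kappa(S)}$. This is the entire content of the proposition.

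Since the heavy lifting has already been done, the only thing to verify carefully is that the remark "Proposition \ref{condition to Frobenius triangle} implies that $Tr(S)=\emptyset$ if and only if $S$ is triangle-free" is genuinely an equivalence and not just one direction. Unpacking it: $Tr(S) \neq \emptyset$ means some $Tr_Q(S) \neq \emptyset$ for $Q \in PF(S)\setminus\{F(S)\}$, which by Proposition \ref{condition to Frobenius triangle} happens exactly when there exist $P, Q \in PF(S)\setminus\{F(S)\}$ with $P - Q \in M(S)$ — precisely the negation of triangle-freeness (the $P_1 = F(S)$ escape clause is excluded once we restrict to $PF(S)\setminus\{F(S)\}$, and $P_2 = F(S)$ cannot occur since then $P_1 - P_2$ would be negative). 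So the equivalence is sound.

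I expect there to be essentially no obstacle here; the proposition is a one-line corollary whose role is to restate Corollary \ref{P(S) no triangle} in the cleaner language of triangle-freeness for use in later sections. The only thing worth being slightly careful about is not re-proving Proposition \ref{condition to Frobenius triangle} from scratch but simply invoking it through the already-stated remark.

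\begin{proof}
By Proposition \ref{condition to Frobenius triangle}, a numerical semigroup is triangle-free if and only if $Tr(S) = \emptyset$. Thus if $S$ is triangle-free, then $Tr(S) = \emptyset$, and Corollary \ref{P(S) no triangle} gives that $S$ is P-minimal.
\end{proof}
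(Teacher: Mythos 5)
Your proof is correct and follows exactly the same route as the paper: invoke Proposition \ref{condition to Frobenius triangle} to conclude $Tr(S)=\emptyset$ from triangle-freeness, then apply Corollary \ref{P(S) no triangle}. Your extra verification that the equivalence between triangle-freeness and $Tr(S)=\emptyset$ is genuinely two-sided is sound but not needed here, since only the forward direction is used.
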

\begin{proof}
If $S$ is triangle-free then by Proposition \ref{condition to Frobenius triangle}, $Tr(S)=\emptyset$. By Corollary \ref{P(S) no triangle}, $S$ is P-minimal.
\end{proof}

\section{Algorithm to determine $P(S)$}

In this section, we present an algorithm to compute $P(S)$ given a numerical semigroup~$S$.  The algorithm essentially works by computing, for each Frobenius triangle $(P,x,y)$, the list of order ideals that satisfy $(P,x,y)$  (Theorem~\ref{Characterisation}), as well as using Lemma~\ref{l:abovebelowtriangle} to further restrict the list of elements considered.  

\begin{alg}\label{a:frobtriangles}
Computes $P(S)$ for a numerical semigroup $S$ using Frobenius triangles.
\begin{algorithmic}
\Function{AssociatedNumericalSets}{$S$}
\State $C_P \gets \{(P, F(S) - P, 0)\} \cup Tr_P(S)$ for each $P \in PF(S) \setminus \{F(S)\}$
\State $R \gets \emptyset$
\ForAll{$A \subseteq PF(S) \setminus \{F(S)\}$ and $a \in \prod_{P\in A} C_P$}
    \State $G \gets \{z \in M(S) \mid z - x \in S$ for some $a_P = (P,x,y)\}$
    \State $B_1 \gets \{z \in M(S) \mid F(S) - y - z \in S$ for some $a_P = (P,x,y)\}$
    \State $B_2 \gets \{z \in M(S) \mid P-z \in S$ for some $P \in PF(S) \setminus A\}$
    \If{$G \cap B_1 = \emptyset$ and $G \cap B_2 = \emptyset$}
        \State $Z \gets M(S) \setminus (G \cup B_1 \cup B_2)$
        \State Add to $R$ the numerical set $A \cup G \cup Y \cup S$ for each order ideal $Y$ of $(Z, \preccurlyeq)$
    \EndIf
\EndFor
\State \Return $R$
\EndFunction
\end{algorithmic}
\end{alg}

Algorithm \ref{a:frobtriangles} works as follows.  Each numerical set $T$ associated to $S$ is enumerated by first choosing the set $A = PF(S) \cap T$ of pseudo-Frobenius numbers in $T$, followed by choosing, for each $P \in A$, either a Frobenius triangle $(P, x, y)$ satisfied by the order ideal $I = T \setminus S$ or having $\overline{P}\in I$ as in Theorem~\ref{Characterisation}.  Since $I$ is an order ideal, every element of the set $G$ must lie in $I$.  Moreover, every element of $B_1$ must lie outside of $I$ by Lemma~\ref{l:abovebelowtriangle}, and any element of $B_2$ must lie outside of $I$ since $A$ contains all maximal elements of $I$.  By Theorem~\ref{Characterisation}, these are the only conditions on $T$, so any such order ideal $I$ of $(M(S),\preccurlyeq)$ containing $G$ and avoiding $B_1$ and $B_2$ yields a numerical set $I \cup S$.  

We may obtain a slight improvement on the main loop in Algorithm~\ref{a:frobtriangles} by using a recursive implementation.  Rather than trying all possible collections $a$ of Frobenius triangles, recursively add Frobenius numbers to $a$ one at a time, each time growing $G$, $B_1$, and $B_2$ appropriately.  This allows one to break out whenever a newly added Frobenius triangle renders $G \cap B_1$ or $G \cap B_2$ nonempty, thereby saving iterations of the main loop.  Table~\ref{tb:algruntimes} contains sample runtimes for a \texttt{Sage} implementation of Algorithm~\ref{a:frobtriangles}, both with and without utilizing this recursive enhancement.  

\begin{table}
\begin{tabular}{l|l|l|l|l|l}
$S$ & $\mathsf e(S)$ & $\mathsf t(S)$ & $P(S)$ & Algorithm~\ref{a:frobtriangles} & Recursive \\
\hline
$\langle 271, 309, 352, 422 \rangle$         & 4  & 4  & 2       & 0.63 s & 0.61 s   \\
$\langle 871, 909, 952, 1022 \rangle$        & 4  & 4  & 2       & 2.3 s & 2.3 s  \\
$\langle 603, 608, 613, \ldots, 653 \rangle$ & 11 & 2  & 2       & 2.1 s & 2.1 s  \\
$\langle 49, 342, 349, 350 \rangle$          & 4  & 13 & 2       & 725.0 s & 0.4 s  \\
$\langle 10, 101, 102, \ldots, 109 \rangle$  & 10 & 9  & 126905  & 60 s & 14.7 s  \\
\end{tabular}
\medskip
\caption{Runtimes for $P(S)$ computations, each using \texttt{GAP} and the package \texttt{numericalsgps}~\cite{numericalsgpsgap}.}
\label{tb:algruntimes}
\end{table}

\section{Numerical semigroups of type 3}\label{sec:type3}

Recall that a numerical semigroup $S$ has type $1$ if and only if $P(S) = 1$.  In Theorem~$\ref{t2p2}$, we proved that $t(S)=2$ implies $P(S) = 2$. In this section we show that $t(S) = 3$ implies $P(S)\in \{2,3,4\}$ and give conditions on $S$ that characterize when each possibility occurs.  Throughout this section, $S$ is a numerical semigroup of type $3$ with pseudo-Frobenius numbers $P<Q<F$.

\begin{lemma}
If $S$ is a numerical semigroup with $t(S)=3$ and $GPF(S)$ has two connected components, then $P(S)=4$.
\end{lemma}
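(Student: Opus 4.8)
The plan is to prove the stronger statement that $S$ is triangle-free and then invoke Corollary~\ref{P(S) no triangle}. Write $PF(S)\setminus\{F\}=\{P,Q\}$ with $P<Q<F$, so that $GPF(S)$ has exactly the two vertices $P,Q$. Having two connected components means there is no edge between them, i.e. $P+Q-F\notin S$, and in particular $\kappa(S)=2$. Once we know $Tr(S)=\emptyset$, Corollary~\ref{P(S) no triangle} gives that $S$ is P-minimal, so $P(S)=2^{\kappa(S)}=4$, as desired. (The lower bound $P(S)\geq 4$ is in any case immediate from Corollary~\ref{lbound}.)

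To establish that $S$ is triangle-free, I would first reduce to a single condition. No triangle has first coordinate $F$, so $Tr(S)=Tr_P(S)\cup Tr_Q(S)$. By Lemma~\ref{largest PF} the largest element of $M(S)$ is a pseudo-Frobenius number, and since $P<Q$ this forces $Q=\max_{<}(M(S))$ and hence $Tr_Q(S)=\emptyset$. By Proposition~\ref{condition to Frobenius triangle}, $Tr_P(S)\neq\emptyset$ if and only if there is some $P'\in PF(S)\setminus\{F\}$ with $P'-P\in M(S)$; the only candidate is $P'=Q$, so $Tr(S)=\emptyset$ exactly when $Q-P\notin M(S)$. Thus it suffices to rule out $Q-P\in M(S)$.

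The heart of the argument is to show that $Q-P\in M(S)$ is incompatible with $P+Q-F\notin S$, so assume $Q-P\in M(S)$. Since $P,Q$ are the maximal elements of $(\M(S),\preccurlyeq)$ by Proposition~\ref{maxElementsArePF} and the poset is finite, $Q-P$ lies weakly below one of them. It cannot satisfy $Q-P\preccurlyeq Q$, since that would give $Q-(Q-P)=P\in S$, contradicting $P\in PF(S)$; hence $Q-P\preccurlyeq P$, that is $2P-Q\in S$. Dually, the minimal elements of $(\M(S),\preccurlyeq)$ are $\overline{P}$ and $\overline{Q}$, and $Q-P$ lies weakly above one of them; it cannot satisfy $\overline{P}\preccurlyeq Q-P$, since that would give $(Q-P)-\overline{P}=Q-F\in S$, impossible as $Q<F$. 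Hence $\overline{Q}\preccurlyeq Q-P$, that is $2Q-P-F\in S$. Adding the two memberships yields
\[
(2P-Q)+(2Q-P-F)=P+Q-F\in S,
\]
contradicting the assumption of two components. Therefore $Q-P\notin M(S)$, so $Tr(S)=\emptyset$ and the conclusion follows.

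I expect the main obstacle to be the middle step of the third paragraph: correctly placing $Q-P$ in the void poset between the maximal element $P$ and the minimal element $\overline{Q}$ (ruling out the alternatives $Q-P\preccurlyeq Q$ and $\overline{P}\preccurlyeq Q-P$ using that $P$ is a gap and $Q<F$), and then observing that the two resulting semigroup memberships telescope exactly to $P+Q-F$. Everything else is bookkeeping with the definitions and the cited results.
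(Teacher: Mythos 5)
Your proof is correct, but it takes a genuinely different route from the paper's. The paper proves this lemma by contradiction using the full classification machinery: it assumes a non-self-dual order ideal $I$ gives an associated numerical set, uses Lemma \ref{check PF self-dual} to force $P\in I$, $\overline{P}\notin I$, invokes Theorem \ref{Characterisation} to produce a Frobenius triangle $(P,x,y)$ satisfied by $I$, and then shows $x,y<\overline{P}$ forces $\overline{Q}\preccurlyeq x,y\preccurlyeq Q$, whence $Q,\overline{Q},\overline{y}\in I$, contradicting $\overline{y}\notin I$. You instead prove the stronger statement that the two-component hypothesis forces $S$ to be triangle-free: your sandwich argument placing $Q-P$ between the maximal element $P$ and the minimal element $\overline{Q}$, and telescoping $(2P-Q)+(2Q-P-F)=P+Q-F\in S$, is valid (every element of the finite poset lies below some maximal and above some minimal element, and the alternatives are correctly excluded since $P\notin S$ and $Q-F<0$). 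This makes the lemma an immediate consequence of Corollary \ref{P(S) no triangle} and Corollary \ref{lbound}. Notably, the paper itself proves essentially your key fact later, in the corollary following Theorem \ref{type 3 characterize} (there phrased as: two components and $Q-P\in M(S)$ would force $\overline{P}\preccurlyeq Q-P$, which is absurd), so your argument is consistent with, and in a sense anticipated by, the paper. What each approach buys: yours is shorter and reveals that the disconnected case is always triangle-free, hence P-minimal; the paper's is uniform with the connected-component lemmas of the same section, where triangles genuinely exist and the order-ideal analysis via Theorem \ref{Characterisation} cannot be avoided.
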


\begin{proof}
By Theorem \ref{self-dual count} we know that there are $4$ self-dual order ideals of $(\M(S),\preccurlyeq)$, which lead to $4$ numerical sets associated to $S$. Assume for the sake of contradiction that there is an order ideal $I$ of $(\M(S),\preccurlyeq)$, that is not self-dual, for which $I\cup S$ is a numerical set associated to $S$.  If $GPF(S)$ has two connected components, then $P+Q-F \not\in S$. This is equivalent to $\overline{P}\not\preccurlyeq Q$ and also to $\overline{Q}\not\preccurlyeq P$. The maximal element above $\overline{P}$ is not $Q$, so $\overline{P}\preccurlyeq P$. Similarly $\overline{Q}\preccurlyeq Q$. By Lemma \ref{largest PF}, $Tr_Q(S)=\emptyset$. Theorem \ref{Characterisation} implies that if $Q\in I$, then $\overline{Q} \in I$.

Since $I$ is not self-dual, Lemma \ref{check PF self-dual} implies that $P\in I$ and $\overline{P}\notin I$. By Theorem \ref{Characterisation}, $I$ must satisfy a Frobenius triangle $(P,x,y) \in Tr_P(S)$. We have $P+x+y=F$, so $x,y<x+y=\overline{P}$. This means that $\overline{P}\not\preccurlyeq x,y$ and so $\overline{Q}\preccurlyeq x,y\preccurlyeq Q$. Since $I$ satisfies the Frobenius triangle $(P,x,y)$, we know that $x\in I$ and $\overline{y}\not\in I$. Therefore $Q\in I$ and so $\overline{Q}\in I$.  Since $y\preccurlyeq Q$, we see that $\overline{Q}\preccurlyeq\overline{y}$ and therefore $\overline{y}\in I$. This contradicts the fact that $I$ satisfies the Frobenius triangle $(P,x,y)$.
\end{proof}

\begin{lemma}
If $GPF(S)$ is connected and $Q-P\not\in M(S)$, then $P(S)=2$.
\end{lemma}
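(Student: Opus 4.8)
The plan is to show that the two self-dual order ideals are the only order ideals $I$ of $(\M(S),\preccurlyeq)$ for which $T=I\cup S$ is associated to $S$. Since $GPF(S)$ is connected we have $\kappa(S)=1$, so Theorem \ref{self-dual count} produces exactly $2^{\kappa(S)}=2$ self-dual order ideals: the one meeting no pseudo-Frobenius numbers, which is $\emptyset$, and the one containing both maximal elements $P,Q$, which is all of $M(S)$. By Proposition \ref{sdideals} both $S$ and $S\cup M(S)=S^*$ are genuinely associated to $S$. Hence it suffices to rule out every \emph{non}-self-dual order ideal, whereupon $P(S)=2$.

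So I would argue by contradiction: suppose $I$ is an order ideal that is not self-dual and $I\cup S$ is associated to $S$. The contrapositive of Lemma \ref{check PF self-dual} supplies some $P'\in PF(S)\cap I$ with $\overline{P'}\notin I$. Applying the ``moreover'' clause of Theorem \ref{Characterisation} to this $P'$, the fact that $\overline{P'}\notin I$ forces a Frobenius triangle $(P',x,y)$ satisfied by $I$; in particular $Tr_{P'}(S)\neq\emptyset$. Since $t(S)=3$, the only candidates are $P'=P$ and $P'=Q$, and the heart of the proof is eliminating both using the two hypotheses.

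The key step is to observe that $Q=\max_{<}(M(S))$: the largest integer in $M(S)$ cannot lie strictly below any other element in the poset order, so it is a maximal element of $(\M(S),\preccurlyeq)$, hence lies in $PF(S)\setminus\{F\}=\{P,Q\}$ by Proposition \ref{maxElementsArePF}, and as $P<Q$ it must equal $Q$. Lemma \ref{largest PF} then gives $Tr_Q(S)=\emptyset$, so the branch $P'=Q$ is impossible. For the branch $P'=P$, Proposition \ref{condition to Frobenius triangle} says $Tr_P(S)\neq\emptyset$ requires some $R\in\{P,Q\}$ with $R-P\in M(S)$; since $P-P=0\notin M(S)$, this means $Q-P\in M(S)$, directly contradicting the hypothesis $Q-P\notin M(S)$. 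Either branch yields a contradiction, so no non-self-dual $I$ occurs and $P(S)=2$.

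The main obstacle—really the only nonroutine point—is justifying that $Q$ is the top element of $M(S)$ in the integer order, since that is exactly what licenses the application of Lemma \ref{largest PF}; everything else is bookkeeping that combines the characterization theorem with the triangle lemmas. I would take care to flag that ``$GPF(S)$ connected'' is used precisely to fix the number of self-dual ideals at $2$ rather than $4$, while ``$Q-P\notin M(S)$'' is used precisely to kill the $P'=P$ branch.
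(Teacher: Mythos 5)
Your proof is correct and is essentially the paper's argument unpacked: the paper just notes that $Q-P\notin M(S)$ makes $S$ triangle-free, so $Tr(S)=\emptyset$ by Proposition~\ref{condition to Frobenius triangle} and Proposition~\ref{none P-Q} gives $P(S)=2^{\kappa(S)}=2$. Your case split on $P'=P$ versus $P'=Q$ re-derives exactly this chain (self-dual ideals counted by Theorem~\ref{self-dual count}, non-self-dual ideals forced to satisfy a Frobenius triangle, no such triangle exists), the only cosmetic difference being that you dispatch the $Q$ branch via Lemma~\ref{largest PF} instead of observing that $P-Q<0$ and $Q-Q=0$ rule it out directly.
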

\begin{proof}
In this case $S$ is triangle-free, so Proposition \ref{none P-Q} implies that $P(S)=2$.
\end{proof}

\begin{lemma}\label{not below Q}
Suppose $Q-P\in M(S)$. If $a\in M(S)$ satisfies $a\not\preccurlyeq Q$, then $Q-P\preccurlyeq a$.
\end{lemma}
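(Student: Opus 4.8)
The plan is to reduce everything to understanding the single gap $Q-a$ and where it sits in the void poset. First I would use Proposition \ref{maxElementsArePF}: the maximal elements of $(\M(S),\preccurlyeq)$ are exactly $P$ and $Q$, so every element of $M(S)$ lies below one of them. Since $a \not\preccurlyeq Q$, this forces $a \preccurlyeq P$, and hence $P - a \in S$. Because $a \preccurlyeq P$ gives $a \le P < Q$, the integer $m := Q - a$ is strictly positive, and $a \not\preccurlyeq Q$ says precisely that $m = Q - a \notin S$. Thus $m$ is a positive gap. The point of introducing $m$ is that the desired conclusion $Q - P \preccurlyeq a$ is literally the statement $P - m = a - (Q-P) \in S$, i.e. $m \preccurlyeq P$; so the whole lemma becomes ``this particular gap is below $P$.''

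The key step is then a case split on $m$ according to whether it is a missing pair. If $m \in M(S)$, then $m$ lies below a maximal element of the void poset, so $m \preccurlyeq P$ or $m \preccurlyeq Q$. The case $m \preccurlyeq Q$ collapses immediately, since it would give $Q - m = a \in S$, contradicting $a \in M(S)$. Hence $m \preccurlyeq P$, which is exactly what we want.

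The main obstacle is the remaining possibility that $m$ is a gap with $m \notin M(S)$: here $m$ need not be comparable to $P$ in the poset at all, so the order structure alone does not finish the argument, and one must bring in the additive structure of $S$. The trick I would use is that $m \notin M(S)$ together with $m \notin S$ forces $F(S) - m \in S$ by the definition of the void. Writing $F(S) - m = F(S) - Q + a$, I would then add this to $P - a \in S$ and invoke closure of $S$ under addition to get $(F(S)-Q+a)+(P-a) = F(S) - (Q - P) \in S$. This contradicts the hypothesis $Q - P \in M(S)$, which by definition requires $F(S) - (Q-P) \notin S$. With both alternative cases excluded, only $m \preccurlyeq P$ survives, yielding $Q - P \preccurlyeq a$. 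An attractive feature of this route is that it needs nothing beyond Proposition \ref{maxElementsArePF} and the definition of $M(S)$, so no external fact about pseudo-Frobenius numbers is required.
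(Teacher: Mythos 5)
Your proof is correct and follows essentially the same route as the paper's: establish $a\preccurlyeq P$ from maximality of $P$ and $Q$, then split on whether $Q-a$ lies in $M(S)$ or in $\h(S)\setminus M(S)$, deriving $Q-P\preccurlyeq a$ in the first case and a contradiction with $Q-P\in M(S)$ in the second. The only difference is expository (naming $m=Q-a$ and phrasing the goal as $m\preccurlyeq P$).
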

\begin{proof}
Since $a\not\preccurlyeq Q$ we know that $a\preccurlyeq P$, that is, $P-a\in S$. Now $Q-a\not\in S$, so either $Q-a \in M(S)$ or $Q-a\in \h(S)\setminus M(S)$.
\begin{itemize}[wide,labelindent=0pt] 
    \item Case 1: Suppose $Q-a\in M(S)$. Now $Q-(Q-a)=a\not\in S$, which means $Q-a\not\preccurlyeq Q$, and so $Q-a\preccurlyeq P$. We have $P+a-Q\in S$, which implies $Q-P\preccurlyeq a$.
    \item Case 2: Suppose $Q-a\in \h(S)\setminus M(S)$. This means that $F+a-Q\in S$. But then,
    \[
    F-(Q-P) =(F+a-Q)+(P-a)\in S.
    \]
    This contradicts the fact that $Q-P\in M(S)$.
\end{itemize}
\end{proof}

\begin{lemma}
Suppose $GPF(S)$ is connected and $Q-P\in M(S)$. If $F+P=2Q$ then $P(S)=3$.  If $F+P \neq 2Q$ then $P(S)=4$.
\end{lemma}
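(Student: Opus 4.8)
The plan is to enumerate the associated numerical sets $T = I \cup S$ by classifying the order ideals $I$ of $(\M(S),\preccurlyeq)$ that yield them, separating the self-dual ideals from the rest. Since $t(S)=3$ we have $PF(S)\setminus\{F\}=\{P,Q\}$, so $GPF(S)$ has vertex set $\{P,Q\}$; connectedness gives $\kappa(S)=1$, and Theorem \ref{self-dual count} then yields exactly two self-dual order ideals. These are $\emptyset$ and $M(S)$ (the latter because a self-dual ideal containing the maximal elements $P,Q$ contains everything below them by Lemma \ref{dual}, and every element lies below some maximal element), producing the associated sets $S$ and $S^*$. Two preliminary facts drive the rest: first, $Tr_Q(S)=\emptyset$ by Proposition \ref{condition to Frobenius triangle}, since neither $P-Q$ nor $Q-Q$ lies in $M(S)$; second, $(P,\,Q-P,\,\overline{Q})\in Tr_P(S)$ because $(Q-P)+\overline{Q}=\overline{P}$.

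Next I would pin down the non-self-dual associated ideals. If $I\cup S$ is associated and $I$ is not self-dual, then Lemma \ref{check PF self-dual} gives some $R\in PF(S)\cap I$ with $\overline{R}\notin I$; since the ``moreover'' clause of Theorem \ref{Characterisation} together with $Tr_Q(S)=\emptyset$ forces $\overline{Q}\in I$ whenever $Q\in I$, this $R$ must be $P$. Thus every non-self-dual associated $I$ satisfies $P\in I$, $\overline{P}\notin I$, and $I$ satisfies some triangle in $Tr_P(S)$, while $Q\in I$ still implies $\overline{Q}\in I$. I would then introduce the principal filter $I_0=\{a\in M(S)\mid a\not\preccurlyeq Q\}$, which by Lemma \ref{not below Q} equals $\{a\mid Q-P\preccurlyeq a\}$. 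A direct check shows $I_0$ is an associated order ideal satisfying $(P,Q-P,\overline{Q})$, with $P\in I_0$, $Q\notin I_0$, and $\overline{P}\notin I_0$ (the last using connectedness, i.e.\ $P+Q-F\in S$, so $\overline{P}\preccurlyeq Q$).

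The technical heart, and the step I expect to be the main obstacle, is the uniqueness claim: \emph{any} non-self-dual associated $I$ with $Q\notin I$ equals $I_0$. Here $Q\notin I$ together with order-idealness gives $I\subseteq\{a\mid a\not\preccurlyeq Q\}=I_0$. The key point is that every triangle $(P,x,y)\in Tr_P(S)$ has $x\preccurlyeq Q-P$: since $y\in M(S)$ lies above a minimal element and Lemma \ref{superlemma} rules out $\overline{P}\preccurlyeq y$ (it would force $0\in M(S)$), we must have $\overline{Q}\preccurlyeq y$, whence Lemma \ref{superlemma} gives $x\preccurlyeq Q-P$. For a satisfied triangle we also have $x\in I\subseteq I_0$, so $Q-P\preccurlyeq x$; antisymmetry forces $x=Q-P\in I$, and upward closure then gives $I=I_0$.

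Finally I would dispatch the case $Q\in I$ by conjugation and read off the dichotomy. If $I$ is non-self-dual associated with $Q\in I$, then $\overline{Q}\in I$, so the dual set $T^*=I^*\cup S$, where $I^*=\{z\in M(S)\mid \overline{z}\notin I\}$, has $Q\notin I^*$; since $A(T^*)=A(T)=S$ and $I^*\notin\{\emptyset,M(S)\}$, the ideal $I^*$ is non-self-dual associated with $Q\notin I^*$, so $I^*=I_0$ and $I=I_0^*$. Hence the only possible associated ideals are $\emptyset, M(S), I_0, I_0^*$, giving $P(S)\le 4$. Now $Q\in I_0^*$ iff $\overline{Q}\notin I_0$ iff $Q-P\not\preccurlyeq\overline{Q}$; since $\overline{Q}$ is a minimal element of $(\M(S),\preccurlyeq)$ and $Q-P\in M(S)$, the relation $Q-P\preccurlyeq\overline{Q}$ holds iff $Q-P=\overline{Q}$, i.e.\ iff $F+P=2Q$. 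Therefore, if $F+P=2Q$ there is no non-self-dual associated ideal containing $Q$, the associated ideals are exactly $\emptyset,M(S),I_0$, and $P(S)=3$; if $F+P\neq 2Q$ then $I_0^*$ contains $Q$, so $I_0^*\neq I_0$ is a genuine fourth associated ideal and $P(S)=4$.
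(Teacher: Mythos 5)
Your proof is correct, and it follows the same overall skeleton as the paper's: both count the two self-dual ideals via $\kappa(S)=1$, observe $Tr_Q(S)=\emptyset$ and reduce any non-self-dual associated ideal $I$ to one satisfying a triangle $(P,x,y)\in Tr_P(S)$ with $x\preccurlyeq Q-P$ via Lemma \ref{superlemma}, and arrive at the same two candidate ideals (your $I_0$ and $I_0^*$ are the paper's $I_1$ and $I_2$). The divergence is in how the two candidates are separated and verified. The paper splits on whether $x=Q-P$ or $x\prec Q-P$ in the satisfied triangle, and in the second case runs a somewhat delicate direct argument (via Corollary \ref{PFtriless} and Lemma \ref{triless}) to pin down $x=\overline{Q}$ and $I=I_2$. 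You instead split on whether $Q\in I$, handle $Q\notin I$ by the antisymmetry argument forcing $x=Q-P$, and dispatch $Q\in I$ entirely by conjugation, using $A(T^*)=A(T)$ to reduce to the first case; this is cleaner and avoids the second case analysis. You also explicitly verify via Theorem \ref{Characterisation} that $I_0$ actually yields an associated numerical set — a step the paper leaves implicit but which is needed to get the lower bound $P(S)\ge 3$ (resp.\ $4$) rather than just the upper bound. Both approaches are sound; yours buys a shorter second case and a fully self-contained count at the cost of invoking the duality machinery.
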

\begin{proof}
Since $GPF(S)$ is connected there are $2$ self-dual order ideals of $(\M(S),\preccurlyeq)$.  Proposition \ref{condition to Frobenius triangle} implies that $Tr_P(S)\neq\emptyset$ and $Tr_Q(S)=\emptyset$. Suppose $I$ is an order ideal for which $I\cup S$ is a numerical set associated to $S$ and $I$ is not self-dual. By Theorem \ref{Characterisation}, if $Q \in I$ then $\overline{Q}\in I$.  Since $I$ is not self-dual, Lemma \ref{check PF self-dual} implies that $P\in I$ and $\overline{P}\not\in I$. Therefore $I$ must satisfy a Frobenius triangle $(P,x,y)\in Tr_P(S)$. This means that $P,x\in I$ and $\overline{y}\not\in I$.

The minimal elements of $(\M(S),\preccurlyeq)$ are $\overline{P}$ and $\overline{Q}$, so we must have either $\overline{P} \preccurlyeq y$ or $\overline{Q}\preccurlyeq y$.  Lemma \ref{superlemma} implies that if $\overline{P}\preccurlyeq y$, then $P- P = 0 \in M(S)$, which is not true.  Therefore, $\overline{P}\not\preccurlyeq y$, so $\overline{Q}\preccurlyeq y$. Lemma \ref{superlemma} implies that $x\preccurlyeq Q-P$. Since $x\in I$, we see that $Q-P\in I$.
\begin{itemize}[wide,labelindent=0pt] 
    \item Case 1: Suppose $x=Q-P$. In this case $\overline{y}=P+x=Q$, so $Q\not\in I$. Let 
    \[
    I_1=\{a\in M(S)\mid Q-P\preccurlyeq a\}.
    \]
Since $Q-P\in I$ we know that $I_1\subseteq I$. On the other hand given $a\in I$, we know that $a\not\preccurlyeq Q$.  Lemma \ref{not below Q} implies that $Q-P\preccurlyeq a$, that is, $a\in I_1$. We conclude that $I=I_1$.
\item Case 2: Suppose $x\prec Q-P$. Corollary \ref{PFtriless} implies that $x\prec Q$, and so $Q\in I$. Since $Tr_Q(S)=\emptyset$, Theorem \ref{Characterisation} implies that $\overline{Q}\in I$. 

Since $P-P\not\in M(S)$ and $Q-P\in M(S)$, Lemma \ref{superlemma} implies that $\overline{P}\not\preccurlyeq x$ and $\overline{Q}\preccurlyeq x$.  If $\overline{Q}\prec x$, then Lemma \ref{triless}  implies $\overline{Q}\preccurlyeq \overline{y}$. However, this is impossible since $\overline{Q}\in I$ and $\overline{y}\not\in I$. Therefore $\overline{Q}=x$. We have $\overline{y}=P+x=P+F-Q=\overline{Q-P}$.

Let
\[
I_2=\{a\in M(S)\mid \overline{Q}\preccurlyeq a\}.
\]
Since $\overline{Q}\in I$ we see that $I_2\subseteq I$. On the other hand if $a\in M(S)\setminus I_2$, then $\overline{Q}\not\preccurlyeq a$. This means that $\overline{a}\not\preccurlyeq Q$. Lemma \ref{not below Q} implies that $Q-P\preccurlyeq\overline{a}$ and so $a\preccurlyeq \overline{Q-P}$. Since $\overline{Q-P}=\overline{y}\not\in I$, we see that $a\not\in I$. We conclude that $I=I_2$.
\end{itemize}

We have seen that if $I\cup S$ is a numerical set associated to $S$ then $I \in \{\emptyset, M(S), I_1,I_2\}$. We see that $I_1=I_2$ if and only if $\overline{Q}=Q-P$, or equivalently, $F+P=2Q$. Therefore  $F+P=2Q$ implies $P(S)=3$, and $F+P\neq 2Q$ implies $P(S)=4$.
\end{proof}
\noindent Note that $S^*=S\cup M(S)$ and $(S\cup I_1)^*= S\cup I_2$.
We summarize the results of this section.
\begin{theorem}\label{type 3 characterize}
Let $S$ be a numerical semigroup of type $3$. Suppose $PF(S)=\{P,Q,F\}$ with $P<Q<F$.
\begin{itemize}
    \item If $P+Q-F\not\in S$, then $P(S)=4$.
    \item If $P+Q-F\in S$ and $Q-P\not\in M(S)$, then $P(S)=2$.
    \item If $P+Q-F\in S$, $Q-P\in M(S)$ and $F+P=2Q$, then $P(S)=3$.
    \item If $P+Q-F\in S$, $Q-P\in M(S)$ and $F+P\neq 2Q$, then $P(S)=4$.
\end{itemize}
\end{theorem}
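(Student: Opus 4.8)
The plan is to assemble the three lemmas proved earlier in this section, since each one already computes $P(S)$ under a hypothesis phrased in terms of the pseudo-Frobenius graph $GPF(S)$. The only genuinely new ingredient needed is the dictionary between the connectivity of $GPF(S)$ and the arithmetic conditions appearing in the theorem statement. Because $t(S) = 3$, the vertex set of $GPF(S)$ is $PF(S) \setminus \{F\} = \{P, Q\}$, and by definition there is an edge between $P$ and $Q$ precisely when $P + Q - F \in S$. Since loops do not affect the number of connected components, I would first record the equivalence: $GPF(S)$ has two connected components if and only if $P + Q - F \notin S$, and $GPF(S)$ is connected if and only if $P + Q - F \in S$.

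With this translation in hand, the case analysis is immediate. First, if $P + Q - F \notin S$, then $GPF(S)$ has two connected components, so the first lemma of this section gives $P(S) = 4$. Otherwise $P + Q - F \in S$ and $GPF(S)$ is connected, and I split into subcases according to whether $Q - P \in M(S)$. If $Q - P \notin M(S)$, the second lemma applies and yields $P(S) = 2$. If $Q - P \in M(S)$, the final lemma applies, giving $P(S) = 3$ when $F + P = 2Q$ and $P(S) = 4$ when $F + P \neq 2Q$. These four cases are mutually exclusive and exhaust all possibilities, so the characterization is complete.

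The substantive work has already been carried out inside the lemmas, so at this stage there is no real obstacle; the theorem is essentially a bookkeeping statement. Were I proving the whole section from scratch, the hard part would be the last lemma (the case $Q - P \in M(S)$), where one must show that every non-self-dual order ideal $I$ giving an associated numerical set equals one of two explicit ideals, $I_1 = \{a \in M(S) \mid Q - P \preccurlyeq a\}$ or $I_2 = \{a \in M(S) \mid \overline{Q} \preccurlyeq a\}$. That argument rests on the Frobenius-triangle machinery of the previous section (Lemmas \ref{triless} and \ref{superlemma}) together with the auxiliary Lemma \ref{not below Q}. Detecting the collapse $I_1 = I_2$, which is equivalent to $\overline{Q} = Q - P$ and hence to $F + P = 2Q$, is precisely what produces the value $P(S) = 3$ rather than $4$. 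Given those lemmas, the present theorem follows in a few lines.
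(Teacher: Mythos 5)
Your proposal is correct and matches the paper's approach exactly: the paper states this theorem as a summary of the three lemmas proved earlier in the section, with the translation between connectivity of $GPF(S)$ (on the two-vertex set $\{P,Q\}$) and the condition $P+Q-F\in S$ left implicit. Your explicit remark that loops do not affect connectivity is a small but accurate clarification of that translation.
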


\begin{corollary}
Let $S$ be a numerical semigroup of type $3$. Suppose $PF(S)=\{P,Q,F\}$ with $P<Q<F$. Then $S$ is P-minimal if and only if $Q-P\not\in M(S)$.
\end{corollary}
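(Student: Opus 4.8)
The plan is to read off the answer from the type-$3$ classification in Theorem \ref{type 3 characterize} together with a direct computation of $\kappa(S)$. Since $PF(S)\setminus\{F\}=\{P,Q\}$, the graph $GPF(S)$ has exactly two vertices, joined by an edge precisely when $P+Q-F\in S$; as loops do not change the number of connected components, I would conclude that $\kappa(S)=1$ when $P+Q-F\in S$ and $\kappa(S)=2$ otherwise, so that $2^{\kappa(S)}$ equals $2$ or $4$ in the two cases.

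Next I would compare P-minimality, that is $P(S)=2^{\kappa(S)}$, against the four cases of Theorem \ref{type 3 characterize}. When $P+Q-F\in S$ we have $2^{\kappa(S)}=2$, and the classification gives $P(S)=2$ exactly when $Q-P\notin M(S)$, while $P(S)\in\{3,4\}$ when $Q-P\in M(S)$; thus among semigroups with $P+Q-F\in S$, P-minimality holds if and only if $Q-P\notin M(S)$. When $P+Q-F\notin S$ we instead have $2^{\kappa(S)}=4=P(S)$, so $S$ is always P-minimal, and to match the desired equivalence I must also verify that $Q-P\notin M(S)$ holds throughout this case.

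The one nontrivial step, and the main obstacle, is this last verification, which I would phrase as the implication: if $Q-P\in M(S)$, then $P+Q-F\in S$. I would prove it by locating $Q-P$ in the void poset. Its conjugate-pair structure (Lemma \ref{selfdual}, Proposition \ref{maxElementsArePF}) tells us the maximal elements are $P,Q$ and the minimal elements are $\overline{P},\overline{Q}$. Since $\overline{P}\preccurlyeq Q-P$ would force $Q-F\in S$ (impossible as $Q<F$) and $Q-P\preccurlyeq Q$ would force $P\in S$ (impossible as $P\in\h(S)$), the element $Q-P$ must be sandwiched as $\overline{Q}\preccurlyeq Q-P\preccurlyeq P$. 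Transitivity then yields $\overline{Q}\preccurlyeq P$, that is $P+Q-F=P-\overline{Q}\in S$, as required. Combining this with the case analysis above gives that $S$ is P-minimal if and only if $Q-P\notin M(S)$.
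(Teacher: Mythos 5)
Your proposal is correct and follows essentially the same route as the paper: both rely on Theorem \ref{type 3 characterize} together with the observation that the crucial implication is ``$Q-P\in M(S)$ forces $P+Q-F\in S$,'' proved by sandwiching $Q-P$ between the minimal element $\overline{Q}$ and the maximal element $P$ of the void poset. The only cosmetic differences are that you phrase this step as a direct contrapositive rather than a contradiction, and you handle the forward direction by comparing $P(S)$ with $2^{\kappa(S)}$ case by case where the paper invokes triangle-freeness and Proposition \ref{none P-Q}.
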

\begin{proof}
If $Q-P\notin M(S)$, then $S$ is triangle-free and Proposition \ref{none P-Q} implies that $S$ is P-minimal.

Conversely, suppose $S$ is P-minimal.  By Theorem \ref{type 3 characterize} either $GPF(S)$ is connected and $Q-P\notin M(S)$ or $GPF(S)$ has two connected components. In the first case we have nothing to prove. 

Suppose $GPF(S)$ has two connected components, so $P(S) = 4$.  Assume for the sake of contradiction that $Q-P\in M(S)$. Then $Q-(Q-P)=P\notin S$, that is, $Q-P\not\preccurlyeq Q$, and so $Q-P\preccurlyeq P$. Since $GPF(S)$ is not connected $P+Q-F\notin S$, which means $\overline{Q}\not\preccurlyeq P$. This implies $\overline{Q}\not\preccurlyeq Q-P$, and so $\overline{P}\preccurlyeq Q-P$. However this is impossible since $(Q-P)-\overline{P}=Q-F<0$. Therefore $Q-P\notin M(S)$.
\end{proof}

\begin{example}
\begin{enumerate}[wide, labelindent=0pt]  
\item Consider $S_1=\langle10, 19, 21, 36, 47\rangle$, which has $PF(S_1)=\{37,53,64\}$. Since $37+53-64\notin S_1$, this belongs to the first case of Theorem \ref{type 3 characterize} and $P(S_1) = 4$. We see that $GPF(S_1)$ is not connected.

\item Consider $S_2=\langle8, 9, 15, 21, 28\rangle$, which has $PF(S_2)=\{19,20,22\}$. Since $19+20-22\in S_2$ and $20-19\notin M(S_2)$, this belongs to the second case of Theorem \ref{type 3 characterize} and  $P(S_2) = 2$.  We see that $GPF(S_2)$ is connected and $Tr(S_2)=\emptyset$.

\item Consider $S_3=\langle8, 13, 22, 27\rangle$, which has $PF(S_3)=\{31,36,41\}$. Since $31+36-41\in S_3,\ 36-31\in M(S_3)$ and $41+31=2\cdot 36$, it belongs to the third case of Theorem \ref{type 3 characterize} and $P(S_3) = 3$.

\item Consider $S_4=\langle25, 29, 32, 45\rangle$, which has $PF(S_4)=\{71,142,155\}$. Since $71+142-155\in S_4,\ 142-71\in M(S_4)$ and $71+155\neq 2\cdot 142$, it belongs to the fourth case of Theorem \ref{type 3 characterize} and $P(S_4)=4$.
\end{enumerate}

Figure \ref{fig t3} shows the void poset and the GPF graph for $S_1,S_2,S_3$ and $S_4$.
\begin{figure}[!tbp]
  \begin{subfigure}[b]{0.2\textwidth}
     \begin{tikzpicture}
     \node (1) at (1,0) {$53$};
     \node (2) at (0,-1) {$37$};
     \node (3) at (1,-1) {$32$};
     \node (4) at (0,-2) {$27$};
     \node (5) at (1,-2) {$11$};
     \draw [black] (1) -- (3);
     \draw [black] (5) -- (3);
     \draw [black] (2) -- (4);
     \end{tikzpicture}
    \caption{$(\M(S_1),\preccurlyeq)$}
  \end{subfigure}
  \begin{subfigure}[b]{0.2\textwidth}
    \begin{tikzpicture}[main/.style = {draw, circle}] 
    \node[main] (1) at (0,0) {$53$};
    \node[main] (2) at (0,-1.4) {$37$};
    \draw (1) to [out=45,in=135,looseness=4] (1);
    \draw (2) to [out=45,in=135,looseness=4] (2);
    \end{tikzpicture}
    \caption{$GPF(S_1)$}
  \end{subfigure}
  \begin{subfigure}[b]{0.25\textwidth}
     \begin{tikzpicture}
     \node (1) at (0,0) {$19$};
     \node (2) at (1,0) {$20$};
     \node (3) at (0,-1) {$10$};
     \node (4) at (1,-1) {$11$};
     \node (5) at (2,-1) {$12$};
     \node (6) at (1,-2) {$2$};
     \node (7) at (2,-2) {$3$};
     \draw [black] (1) -- (3);
     \draw [black] (1) -- (4);
     \draw [black] (2) -- (4);
     \draw [black] (2) -- (5);
     \draw [black] (3) -- (6);
     \draw [black] (4) -- (6);
     \draw [black] (4) -- (7);
     \draw [black] (5) -- (7);
     \end{tikzpicture}
    \caption{$(\M(S_2),\preccurlyeq)$}
  \end{subfigure}
   \begin{subfigure}[b]{0.2\textwidth}
    \begin{tikzpicture}[main/.style = {draw, circle}] 
    \node[main] (1) at (0,0) {$20$};
    \node[main] (2) at (0,-1.4) {$19$};
    \draw (1) to [out=45,in=135,looseness=4] (1);
    \draw (1) -- (2);
    \end{tikzpicture}
    \caption{$GPF(S_2)$}
  \end{subfigure}
  \begin{subfigure}[b]{0.2\textwidth}
     \begin{tikzpicture}
     \node (1) at (0,0) {$36$};
     \node (2) at (1,0) {$31$};
     \node (3) at (0,-1) {$23$};
     \node (4) at (1,-1) {$18$};
     \node (5) at (0,-2) {$10$};
     \node (6) at (1,-2) {$5$};
     \draw [black] (1) -- (3);
     \draw [black] (5) -- (3);
     \draw [black] (2) -- (4);
     \draw [black] (6) -- (4);
     \draw [black] (2) -- (3);
     \draw [black] (5) -- (4);
     \end{tikzpicture}
    \caption{$(\M(S_3),\preccurlyeq)$}
  \end{subfigure}
  \begin{subfigure}[b]{0.2\textwidth}
    \begin{tikzpicture}[main/.style = {draw, circle}] 
    \node[main] (1) at (0,0) {$31$};
    \node[main] (2) at (0,-1.4) {$36$};
    \draw (1) to [out=45,in=135,looseness=4] (1);
    \draw (1) -- (2);
    \end{tikzpicture}
    \caption{$GPF(S_3)$}
  \end{subfigure}
  \begin{subfigure}[b]{0.35\textwidth}
     \begin{tikzpicture}
     \node (1) at (0,-0.2) {$142$};
     \node (2) at (-1,-0.8) {$113$};
     \node (3) at (1,-0.8) {$117$};
     \node (4) at (-1,-3.8) {$84$};
     \node (5) at (0,-1.6) {$88$};
     \node (6) at (1,-1.6) {$92$};
     \node (7) at (0,-2.4) {$63$};
     \node (8) at (1,-2.4) {$67$};
     \node (9) at (2,-0.2) {$71$};
     \node (10) at (0,-3.2) {$38$};
     \node (11) at (2,-3.2) {$42$};
     \node (12) at (1,-3.8) {$13$};
     \draw [black] (1) -- (2);
     \draw [black] (1) -- (3);
     \draw [black] (2) -- (4);
     \draw [black] (2) -- (5);
     \draw [black] (3) -- (5);
     \draw [black] (3) -- (6);
     \draw [black] (5) -- (7);
     \draw [black] (6) -- (7);
     \draw [black] (6) -- (8);
     \draw [black] (10) -- (7);
     \draw [black] (10) -- (8);
     \draw [black] (11) -- (8);
     \draw [black] (11) -- (9);
     \draw [black] (12) -- (10);
     \draw [black] (12) -- (11);
     \end{tikzpicture}
    \caption{$(\M(S_4),\preccurlyeq)$}
  \end{subfigure}
  \begin{subfigure}[b]{0.2\textwidth}
    \begin{tikzpicture}[main/.style = {draw, circle}] 
    \node[main] (1) at (0,0) {$142$};
    \node[main] (2) at (0,-1.4) {$71$};
    \draw (1) to [out=45,in=135,looseness=4] (1);
    \draw (1) -- (2);
    \end{tikzpicture}
    \caption{$GPF(S_4)$}
  \end{subfigure}
  \caption{Void poset and $GPF$ graph of $S_1=\langle10, 19, 21, 36, 47\rangle$, $S_2=\langle8, 9, 15, 21, 28\rangle$, $S_3=\langle8, 13, 22, 27\rangle$ and $S_4=\langle25, 29, 32, 45\rangle$}
   \label{fig t3}
\end{figure}
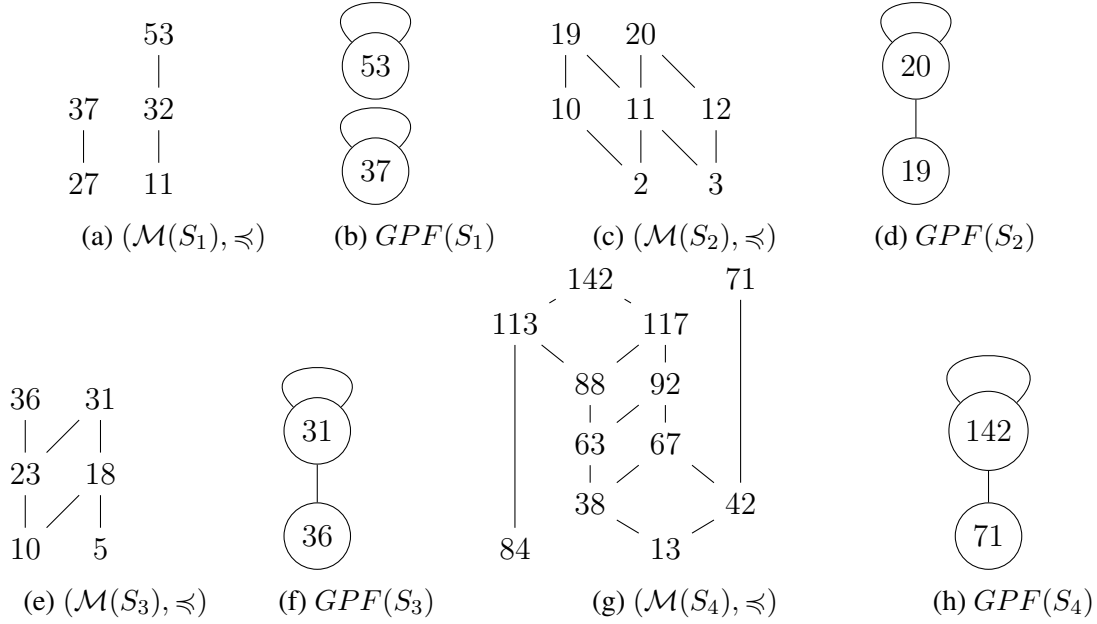
\end{example}

\section{Numerical semigroups of type 4}

In this section we show that there is a significant change in the behavior of $P(S)$ as we move from numerical semigroups of type $3$ to numerical semigroups of type $4$.  We show that there exist numerical semigroups $S$ with $t(S) = 4$ and $P(S)$ arbitrarily large.

\begin{proposition}\label{t=4 P(S) large}
Let
\[
S_n=\{0,5,10,15,\dots,5n\rightarrow\}.
\]
We have $t(S_n)=4$ and $P(S_n)=2n+4$.
\end{proposition}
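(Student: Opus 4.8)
The plan is to reduce the problem to a finite combinatorial count of order ideals of the void poset, which for $S_n$ has a very simple shape. First I would compute the basic invariants: the gaps of $S_n$ are exactly the non-multiples of $5$ in $[1,5n-1]$, so $g(S_n)=4n$ and $F(S_n)=5n-1$. A short check shows $PF(S_n)=\{5n-4,5n-3,5n-2,5n-1\}$ (since for a gap $P$ the only nontrivial condition is $P+5\in S_n$, forcing $P\ge 5n-5$), whence $t(S_n)=4$. Writing $\overline{x}=F(S_n)-x$, I would then show $M(S_n)=\{5k+r\mid 0\le k\le n-1,\ r\in\{1,2,3\}\}$ and that a nontrivial relation $x\prec y$ occurs precisely between elements of the same residue class mod $5$ — because any difference $y-x$ that is positive and not a multiple of $5$ is at most $5n-3<5n$, hence a gap. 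Thus $(\M(S_n),\preccurlyeq)$ is a disjoint union of three chains $C_1,C_2,C_3$ (indexed by $r$), each with $n$ elements, and conjugation swaps $C_1\leftrightarrow C_3$ and reverses $C_2$.

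Next I would parametrize order ideals. Since they are up-sets and the poset is three chains, each order ideal $I$ is given by a triple $(i_1,i_2,i_3)\in\{0,\dots,n\}^3$, where $i_r$ counts the top elements of $C_r$ in $I$; the top element $P_r:=5(n-1)+r$ lies in $I$ iff $i_r\ge 1$, and the bottom element lies in $I$ iff $i_r=n$, while $\overline{P_r}$ is the bottom of $C_{4-r}$, so $\overline{P_r}\in I$ iff $i_{4-r}=n$. I would then record the Frobenius triangles: since $(P,x,y)$ is a triangle iff $x+y=F(S_n)-P$ with $x,y\in M(S_n)$, one gets $Tr_{P_1}(S_n)=\{(P_1,1,2),(P_1,2,1)\}$, $Tr_{P_2}(S_n)=\{(P_2,1,1)\}$, and $Tr_{P_3}(S_n)=\emptyset$. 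Using $2P_r\in S_n$ (true for $n\ge 2$) and the identities $\overline{1}=P_3$, $\overline{2}=P_2$, Theorem \ref{Characterisation} then becomes three explicit constraints on $(i_1,i_2,i_3)$: (C1) if $i_1\ge 1$ then $i_3=n$, or ($i_1=n$ and $i_2=0$), or ($i_2=n$ and $i_3=0$); (C2) if $i_2\ge 1$ then $i_2=n$, or ($i_1=n$ and $i_3=0$); (C3) if $i_3\ge 1$ then $i_1=n$.

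Finally I would count the triples satisfying (C1)--(C3) by splitting on $i_3\in\{0\}$, $\{n\}$, $\{1,\dots,n-1\}$, and within the case $i_3=0$ also on $i_1\in\{0\}$, $\{1,\dots,n-1\}$, $\{n\}$. Each subcase is a routine tally giving $n+3$, $2$, and $n-1$ solutions respectively, so $P(S_n)=(n+3)+2+(n-1)=2n+4$. The case $n=1$ is degenerate — the chains are single points and the conditions $2P_r\in S_n$ fail — but it is exactly Example \ref{eg 05->}, which gives $P(S_1)=6=2\cdot 1+4$.

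The main obstacle I anticipate is translating the three alternatives of Theorem \ref{Characterisation} into the coordinate constraints without conjugation errors, keeping straight that a satisfied triangle $(P_r,x,y)$ simultaneously pins down an ``$i=n$'' condition (from $x\in I$) and an ``$i=0$'' condition (from $\overline{y}\notin I$); after that, organizing the final case analysis cleanly. As a consistency check, one can verify that $GPF(S_n)$ has components $\{P_1,P_3\}$ and $\{P_2\}$, so $\kappa(S_n)=2$, and that the four self-dual order ideals $(0,0,0),(0,n,0),(n,0,n),(n,n,n)$ account for exactly the ``$+4$'', matching the lower bound $P(S_n)\ge 2^{\kappa(S_n)}=4$ of Corollary \ref{lbound}.
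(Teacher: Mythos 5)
Your proof is correct and follows essentially the same route as the paper: both compute the void poset as three $n$-element chains, identify the same three Frobenius triangles $(5n-3,1,1)$, $(5n-4,1,2)$, $(5n-4,2,1)$, and apply Theorem~\ref{Characterisation} to count the admissible order ideals. The only differences are organizational --- you encode each order ideal by a coordinate triple and count all solutions of the resulting constraints directly, whereas the paper first counts the four self-dual ideals via Theorem~\ref{self-dual count} and then the $2n$ non-self-dual ones by a case analysis on which pseudo-Frobenius number witnesses non-self-duality --- and your explicit handling of the degenerate case $n=1$ (where $2P_r\in S_n$ can fail) is a detail the paper does not separate out.
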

\begin{proof}
We see that $F(S_n)=5n-1$ and 
\[
M(S_n)=\{1,6,\dots, 5n-4\}\cup\{2,7,\dots, 5n-3\}\cup\{3,8,\dots,5n-2\}.
\]
The void poset $(\M(S_n),\preccurlyeq)$ is the union of $3$ chains $1\preccurlyeq 6\preccurlyeq\dots\preccurlyeq 5n-4$, $2\preccurlyeq 7\preccurlyeq\dots\preccurlyeq 5n-3$ and $3\preccurlyeq 8\preccurlyeq\dots\preccurlyeq 5n-2$. See Figure \ref{fig:void poset S6} for a depiction of $(\M(S_6),\preccurlyeq)$.
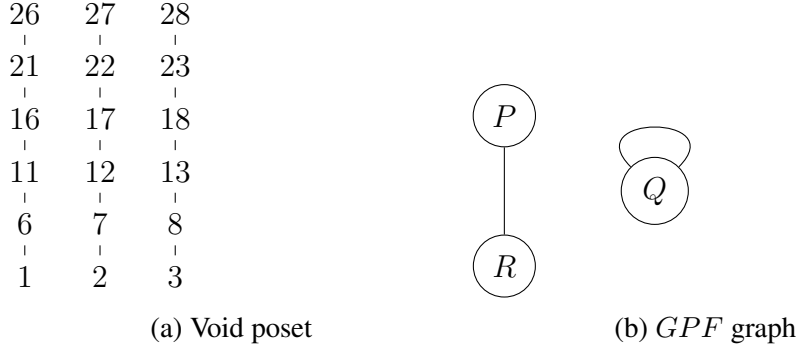
\begin{figure}[!tbp]
  \begin{subfigure}[b]{0.4\textwidth}
     \begin{tikzpicture}
     \node (1) at (0,0) {$26$};
     \node (2) at (1,0) {$27$};
     \node (3) at (2,0) {$28$};
     \node (4) at (0,-0.7) {$21$};
     \node (5) at (1,-0.7) {$22$};
     \node (6) at (2,-0.7) {$23$};
     \node (7) at (0,-1.4) {$16$};
     \node (8) at (1,-1.4) {$17$};
     \node (9) at (2,-1.4) {$18$};
     \node (10) at (0,-2.1) {$11$};
     \node (11) at (1,-2.1) {$12$};
     \node (12) at (2,-2.1) {$13$};
     \node (13) at (0,-2.8) {$6$};
     \node (14) at (1,-2.8) {$7$};
     \node (15) at (2,-2.8) {$8$};
     \node (16) at (0,-3.5) {$1$};
     \node (17) at (1,-3.5) {$2$};
     \node (18) at (2,-3.5) {$3$};
     \draw [black] (1) -- (4);
     \draw [black] (2) -- (5);
     \draw [black] (3) -- (6);
     \draw [black] (7) -- (4);
     \draw [black] (8) -- (5);
     \draw [black] (9) -- (6);
     \draw [black] (7) -- (10);
     \draw [black] (8) -- (11);
     \draw [black] (9) -- (12);
     \draw [black] (13) -- (10);
     \draw [black] (14) -- (11);
     \draw [black] (15) -- (12);
     \draw [black] (13) -- (16);
     \draw [black] (14) -- (17);
     \draw [black] (15) -- (18);
     \end{tikzpicture}
    \caption{Void poset}
  \end{subfigure}
  \begin{subfigure}[b]{0.4\textwidth}
    \begin{tikzpicture}[main/.style = {draw, circle}] 
    \node[main] (1) at (0,0) {$P$};
    \node[main] (2) at (0,-2) {$R$};
    \node[main] (3) at (2,-1) {$Q$};
    \draw (3) to [out=45,in=135,looseness=4] (3);
    \draw (1) -- (2);
    \end{tikzpicture}
    \caption{$GPF$ graph}
  \end{subfigure}
  \caption{Void poset and $GPF$ graph of $S_6=\langle5,31,32,33,34\rangle$.}
    \label{fig:void poset S6}
\end{figure}

Since $x \in PF(S_n)$ implies that $5+x \in S$, it is easy to check that $PF(S_n)=\{5n-4,5n-3,5n-2,5n-1\}$, and so $t(S_n) = 4$.  Let $P=5n-4,\ Q=5n-3,\ R=5n-2$, and $F = F(S_n) = 5n-1$. It is easy to check that $GPF(S_n)$ has an edge between $P$ and $R$ and a loop on $Q$. So, $GPF(S_n)$ has two connected components, and so there are $4$ self-dual order ideals of $(\M(S_n),\preccurlyeq)$.

Lemma \ref{largest PF} implies that $Tr_R(S_n)=\emptyset$. From the description of $M(S_n)$, we can check that
\[
Tr(S_n) = \{(5n-3,1,1),\ (5n-4,1,2),\ (5n-4,2,1)\}.
\]
Suppose $I$ is an order ideal of $(\M(S_n),\preccurlyeq)$ that is not self-dual and $I\cup S$ is a numerical set associated to $S$. Lemma \ref{check PF self-dual} implies that either $P\in I$ and $\overline{P}\notin I$, or $Q\in I$ and $\overline{Q}\not\in I$.
\begin{itemize}[wide,labelindent=0pt] 
    \item Case $1$: Suppose $Q=5n-3\in I$ and $\overline{Q}=2\not\in I$.  Theorem \ref{Characterisation} implies that $I$ has to satisfy a Frobenius triangle in $Tr_Q(S_n)$, and therefore $I$ must satisfy $(5n-3,1,1)$. This means $1\in I$ and $\overline{1} = 5n-2\not\in I$. Since $1\in I$, we see that $\{1,6,\dots,5n-4\}\subseteq I$. Since $5n-2\not\in I$, we see that $\{3,8,\dots,5n-2\}\cap I=\emptyset$.
    
Since $P \in I \cap PF(S_n)$ and $\overline{P} \not\in I$, Theorem \ref{Characterisation} implies that $I$ satisfies a Frobenius triangle in $Tr_P(S_n)$.  We see that $I$ does not satisfy $(5n-4,1,2)$ because $\overline{2}=5n-3\in I$. We see that $I$ does not satisfy $(5n-4,2,1)$ because $2\not\in I$.  This is a contradiction.

    \item Case $2$: Suppose $P=5n-4\in I$ and $\overline{P}=3\not\in I$. Theorem \ref{Characterisation} implies that $I$ satisfies a Frobenius triangle in $Tr_P(S_n)$.  First consider the case when $I$ satisfies $(5n-4,1,2)$. This means $1\in I$ and $\overline{2}=5n-3\not\in I$. This implies $\{1,6,\dots,5n-4\}\subseteq I$ and $\{2,7,\dots,5n-3\}\cap I=\emptyset$. The remaining elements of $M(S_n)$ are $\{8,11,\dots,5n-2\}$. Once we decide the first element in this list to include in $I$ then all larger elements must also be in $I$. This gives $n$ choices including the choice to not include any elements from this list. Since $\overline{5n-2}=1\in I$, Theorem \ref{Characterisation} implies that each of these choices leads to a numerical set associated to~$S$.   
      
    Next consider the case when $I$ satisfies $(5n-4,2,1)$. This means $2\in I$ and $\overline{1}=5n-2\not\in I$. This implies $\{2,7,\dots,5n-3\}\subseteq I$ and $\{3,8,\dots,5n-2\}\cap I=\emptyset$.  Note that $\overline{Q} = 2 \in I$.  The remaining elements of $M(S_n)$ are $\{1,6,\dots,5n-9\}$. As above, once we decide the first element in this list to include in $I$ then all larger elements must also be in $I$.  Each of these $n$ choices leads to a numerical set associated to $S$.   
    \end{itemize}
We have seen that $(\M(S_n),\preccurlyeq)$ has $4$ self-dual order ideals and $2n$ order ideals $I$ that are not self dual for which $I \cup S$ is a numerical set associated to $S$. Therefore, $P(S_n)=2n+4$.
\end{proof}

\section{Numerical Semigroups of Large Type}\label{sec:large}

In this section we focus on two families of numerical semigroups of large type.  
\subsection{Numerical semigroups with $P(S) = 2$ and large type} We first describe a family of numerical semigroups where every member has $P(S) = 2$ that includes semigroups of arbitrarily large type.

\begin{proposition}\label{t large P(S)=2}
Let $n\geq1,\ m = 2n+1$, and 
\[
S_n=\{0,2m\rightarrow\}\cup\{m+2k \mid 0 \leq k \leq n-1\}.
\]
Then $S_n$ is a numerical semigroup with $t(S_n)=n+1$ and $P(S_n) = 2$.
\end{proposition}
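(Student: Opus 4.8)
The plan is to pin down the arithmetic of $S_n$ completely and then invoke the triangle-free machinery of Section~\ref{sec:classify}. Concretely, I will show that $S_n$ is \emph{triangle-free} and that its pseudo-Frobenius graph $GPF(S_n)$ is connected. Proposition~\ref{none P-Q} then gives that $S_n$ is P-minimal, so $P(S_n)=2^{\kappa(S_n)}=2^{1}=2$. The type count $t(S_n)=n+1$ will fall out of the explicit description of $PF(S_n)$ obtained along the way.

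Writing $m=2n+1$, the elements of $S_n$ below $2m$ are exactly $0$ together with the odd numbers $m,m+2,\dots,4n-1$, while every integer $\ge 2m$ lies in $S_n$; closure under addition is immediate because any sum of two nonzero elements is at least $2m$, so $S_n$ is a numerical semigroup. Thus $F:=F(S_n)=2m-1=4n+1$ and
\[
\h(S_n)=\{1,\dots,2n\}\cup\{2n+2,2n+4,\dots,4n\}\cup\{4n+1\}.
\]
To determine $PF(S_n)$ I would check each gap: the even numbers $2n+2,\dots,4n$ (note $4n=2m-2$) are pseudo-Frobenius, since adding any nonzero element of $S_n$ to such a $P>m$ yields a value at least $2m$, hence in $S_n$; every smaller gap $P$ is excluded by exhibiting a nonzero $s\in S_n$ with $P+s\notin S_n$ (take $s=F-P$, an odd middle element, when $P$ is an even gap, and $s=m$ when $P$ is an odd gap). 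This gives
\[
PF(S_n)=\{2n+2,2n+4,\dots,4n\}\cup\{4n+1\},
\]
so $t(S_n)=n+1$.

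A parallel computation of the void yields $M(S_n)=\{1,3,\dots,2n-1\}\cup\{2n+2,2n+4,\dots,4n\}$, of the expected size $2g(S_n)-F-1=2n$. For connectivity of $GPF(S_n)$, the largest proper pseudo-Frobenius number $2m-2=4n$ satisfies $(4n)+Q-F=Q-1\in S_n$ for every even $Q\in PF(S_n)$, because $Q-1$ is then an odd number lying in $\{m,\dots,2m-3\}\subseteq S_n$; hence $4n$ is adjacent to every vertex and $\kappa(S_n)=1$. For triangle-freeness I would verify the defining condition directly: if $P_1,P_2\in PF(S_n)$ satisfy $P_1-P_2\in M(S_n)$ and $P_1\neq F$, then (since $M(S_n)\subseteq\N$ forces $P_2<P_1<F$) both $P_1,P_2$ are even, so $P_1-P_2$ is a positive even number at most $4n-(2n+2)=2n-2$, which is smaller than the least even element $2n+2$ of $M(S_n)$ and cannot equal an odd element of $M(S_n)$ — contradicting $P_1-P_2\in M(S_n)$. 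Thus every such pair has $P_1=F$, and $S_n$ is triangle-free.

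With both facts in hand, Proposition~\ref{none P-Q} shows $S_n$ is P-minimal, so $P(S_n)=2^{\kappa(S_n)}=2$. I expect the main difficulty to be bookkeeping rather than conceptual: everything rests on getting $\h(S_n)$, $PF(S_n)$, and especially $M(S_n)$ exactly right, tracking parities and the boundary behavior near $2m$, since the triangle-free argument depends precisely on the even part of $M(S_n)$ beginning at $2n+2$ while differences of even pseudo-Frobenius numbers stay strictly below that threshold.
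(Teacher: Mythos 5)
Your proposal is correct and follows essentially the same route as the paper: compute $PF(S_n)$ and $M(S_n)$ explicitly, observe that differences of proper pseudo-Frobenius numbers are positive even integers too small to lie in $M(S_n)$ (triangle-freeness), show that the vertex $4n$ is adjacent to every vertex of $GPF(S_n)$ (connectedness, so $\kappa(S_n)=1$), and conclude via Proposition~\ref{none P-Q}. The only cosmetic difference is that you exclude the small gaps from $PF(S_n)$ by exhibiting explicit witnesses $s$ with $P+s\notin S_n$, whereas the paper rules out the odd small gaps by showing they are non-maximal in the void poset.
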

The family we consider is a subset of the family in \cite[Proposition 16]{NathanPartition}. They show that each member of the family has $P(S) = 2$ and $|M(S)|$ can be arbitrarily large.  They do not discuss the psedo-Frobenius numbers or the type of these semigroups. 

\begin{proof}
Note that $F=F(S_n)=2m-1=4n+1$. We see that $S_n$ is a numerical semigroup since all nonzero elements of $S_n$ are larger that $\frac{F}{2}$.
The void of $S_n$ is
\[
M(S_n)=\{2i+1\mid 0\leq i\leq n-1\}\cup \{2i\mid n+1\leq i\leq 2n \}.
\]
For $k\in [0,n-1],\ 2k+1,\ 2(k+n+1)\in M(S_n)$ and
\[
2(k+n+1)-(2k+1)=2n+1=m\in S_n.
\]
This means that $2k+1\preccurlyeq 2(k+n+1)$ in $(\M(S_n),\preccurlyeq)$, and so $2k+1\notin PF(S_n)$. On the other hand, for $k\in [n+1,2n]$,
\[
2k+m\geq 2(n+1)+2n+1=4n+3>F.
\]
Since $m$ is the smallest nonzero element of $S_n$, we see that $2k+S_n\setminus\{0\}\subseteq S_n$, and so $2k\in PF(S_n)$. Therefore,
\[
PF(S_n)\setminus\{F\}=\{2k\mid n+1\leq k\leq 2n\},
\]
which means $t(S_n)=n+1$. 

A main idea for the rest of the proof is to show that each $S_n$ is triangle-free and that $GPF(S_n)$ is connected.  Proposition \ref{none P-Q} then implies that $P(S_n) = 2$.  See Figure \ref{fig:void poset S3} for the poset $(\M(S_3),\preccurlyeq)$ and $GPF(S_3)$.

\begin{figure}[!tbp]
  \begin{subfigure}[b]{0.4\textwidth}
     \begin{tikzpicture}
     \node (1) at (0,0) {$8$};
     \node (2) at (1,0) {$10$};
     \node (3) at (2,0) {$12$};
     \node (4) at (0,-1) {$1$};
     \node (5) at (1,-1) {$3$};
     \node (6) at (2,-1) {$5$};
     \draw [black] (1) -- (4);
     \draw [black] (2) -- (5);
     \draw [black] (2) -- (4);
     \draw [black] (3) -- (4);
     \draw [black] (3) -- (5);
     \draw [black] (3) -- (6);
     \end{tikzpicture}
    \caption{Void poset}
  \end{subfigure}
  \begin{subfigure}[b]{0.4\textwidth}
    \begin{tikzpicture}[main/.style = {draw, circle}] 
    \node[main] (1) at (0,0) {$8$};
    \node[main] (2) at (2,0) {$12$};
    \node[main] (3) at (4,0) {$10$};
    \draw (1) -- (2);
    \draw (2) -- (3);
    \draw (2) to [out=45,in=135,looseness=5] (2);
    \end{tikzpicture} 
    \caption{$GPF$ graph}
  \end{subfigure}
  \caption{$(\M(S_3),\preccurlyeq)$ and $GPF(S_3)$ where $S_3=\{0,7,9,11,14\rightarrow\}$.}
    \label{fig:void poset S3}
\end{figure}

Let $P,Q \in PF(S) \setminus \{F\}$ with $P<Q$.  Since $P$ and $Q$ are both even so is $P-Q$, and $0<P-Q\leq 2n$.  Therefore, $P-Q\notin M(S_n)$. Proposition \ref{condition to Frobenius triangle} implies that $Tr(S_n)=\emptyset$, or equivalently, that $S_n$ is triangle-free. By Proposition \ref{none P-Q}, $P(S_n)=2^{\kappa}$ where $\kappa$ is the number of connected components of $GPF(S_n)$.  To complete the proof, we need only show that $GPF(S_n)$ is connected.

Suppose that $k \in [n+1,2n]$.  We have that $2k,\ 4n\in PF(S_n)$ and
\[
2k+4n-F=2k-1=2n+1+2(k-n-1)\in S_n.
\]
We see that there is an edge between $2k$ and $4n$ in $GPF(S_n)$. We conclude that $GPF(S_n)$ is connected, completing the proof.
\end{proof}

Propositions \ref{t=4 P(S) large} and \ref{t large P(S)=2}  make it clear that for semigroups of type at least $4$, the connection between the set of pseudo-Frobenius numbers $PF(S)$ and $P(S)$ is not so clear.

\subsection{Maximal Embedding Dimension Numerical Semigroups}

Recall that $S$ has maximal embedding dimension if and only if its number of minimal generators is equal to its multiplicity, that is, $e(S) = m(S)$.  We characterize $P(S)$ for numerical semigroups $S$ that have maximal embedding dimension and are triangle-free.
\begin{theorem}
Suppose $S$ is a triangle-free numerical semigroup with maximal embedding dimension. Let $m=m(S)$ and $F=F(S)$.
\begin{enumerate}[wide, labelindent=0pt]  
    \item Suppose $x,y \in M(S)$ with $x \le y$.  Then $x\preccurlyeq y$ in $(\M(S),\preccurlyeq)$ if and only if $m \mid (y-x)$.
    \item If $m$ is odd then $P(S)=2^{\frac{m-1}{2}}$.
    \item If $m$ is even and $F$ is odd then $P(S)=2^{\frac{m-2}{2}}$.
    \item If $m$ and $F$ are both even then $P(S)=2^{\frac{m}{2}}$.
\end{enumerate}
\end{theorem}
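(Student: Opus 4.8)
The plan is to first establish part (1), the combinatorial description of the poset, since parts (2)--(4) are essentially a counting exercise once the poset is understood. For part (1), one direction is immediate: if $m \mid (y-x)$ with $x \le y$, then $y - x$ is a nonnegative multiple of $m = m(S)$, hence lies in $S$, so $x \preccurlyeq y$. For the converse, I would use maximal embedding dimension. Recall that $S$ has maximal embedding dimension means $e(S) = m(S)$, and a standard fact (which I would invoke or prove quickly) is that for such $S$, every element $s \in S \setminus \{0\}$ with $s < m + (\text{next generator})$ forces the minimal generators to be exactly $\{m\} \cup (\text{the $m-1$ largest elements in distinct residue classes mod } m)$. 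Concretely, maximal embedding dimension is equivalent to the statement that $s - m \in S$ for every $s \in S \setminus \{0\}$ with $s > m$; equivalently, if $a, b \in S$ are both positive and congruent mod $m$, then their difference is a multiple of $m$ lying in $S$. The key point I need is: if $x, y \in M(S)$ with $x \le y$ and $y - x \in S \setminus \{0\}$, then $m \mid (y - x)$. Suppose not; then $y - x$ is a positive element of $S$ not divisible by $m$, so $y - x \ge$ some minimal generator $n_i \ne m$, and I would derive that $y \equiv x + n_i$, pushing $x$ or $y$ into $S$ via the maximal embedding dimension structure, contradicting $x, y \in M(S)$. I expect this converse to be the main obstacle, as it is where the maximal embedding dimension hypothesis and the triangle-free hypothesis must be used carefully.

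Once part (1) holds, the void poset $(\mathcal{M}(S), \preccurlyeq)$ decomposes as a disjoint union of chains, one chain for each residue class mod $m$ that contains gaps in $M(S)$: within each residue class the elements of $M(S)$ are totally ordered by divisibility of differences by $m$, and distinct residue classes are incomparable. Since $S$ is triangle-free, Proposition \ref{none P-Q} gives $P(S) = 2^{\kappa(S)}$, so it remains only to count the connected components $\kappa(S)$ of $GPF(S)$. By Proposition \ref{maxElementsArePF} the vertices of $GPF(S)$ are the maximal elements of the poset, i.e. the tops of the chains, which are exactly $PF(S) \setminus \{F\}$.

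The counting of $\kappa(S)$ is where the parity cases (2)--(4) come from. Using the conjugation $\overline{x} = F - x$ and Lemma \ref{selfdual}, I would analyze the edge structure of $GPF(S)$: there is an edge between $P$ and $Q$ iff $\overline{P} \preccurlyeq Q$, i.e. iff $F - P - Q \in S$, which by part (1) happens iff the relevant residues line up mod $m$. I would count the residue classes mod $m$ occupied by $M(S)$, pair each class $r$ with the class $F - r \pmod m$ under conjugation, and observe that these pairings glue chains into components of $GPF(S)$. The number of components then reduces to counting orbits of the involution $r \mapsto F - r \pmod m$ on the set of occupied residue classes, with fixed points (self-paired classes, occurring when $2r \equiv F$) treated separately; loops in $GPF(S)$ (edges from a vertex to itself, occurring when $2P \equiv F \pmod m$ with $F - 2P \in S$) account for the remaining bookkeeping. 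The three cases arise precisely from how the involution $r \mapsto F - r$ behaves on $\mathbb{Z}/m\mathbb{Z}$: when $m$ is odd it has exactly one fixed residue, when $m$ is even and $F$ is odd it has no fixed residue, and when $m$ and $F$ are both even it has two fixed residues. Carefully translating each fixed-point count into the correct exponent $\tfrac{m-1}{2}$, $\tfrac{m-2}{2}$, or $\tfrac{m}{2}$, while accounting for which residue classes actually contain gaps versus semigroup elements, is the delicate part of steps (2)--(4), but it is bookkeeping rather than a conceptual hurdle once part (1) and the chain decomposition are in hand.
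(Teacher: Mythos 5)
Your overall architecture matches the paper's: part (1) reduces the void poset to a disjoint union of chains indexed by residue classes modulo $m$, triangle-freeness gives $P(S)=2^{\kappa(S)}$ via Proposition \ref{none P-Q}, and $\kappa(S)$ is computed by pairing the $m-2$ vertices of $GPF(S)$ under the involution $r\mapsto F-r$ on residues, with the three cases coming from its fixed points. Parts (2)--(4) of your plan are sound in outline. The genuine gap is the converse direction of part (1), which you correctly identify as the main obstacle but do not actually bridge.

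Three specific problems there. First, your stated characterization of maximal embedding dimension is false: it is not true that $s-m\in S$ for every $s\in S\setminus\{0\}$ with $s>m$ (take $S=\langle 3,4,5\rangle$ and $s=4$). The correct fact to extract is $t(S)=m-1$, so that $PF(S)$ contains exactly one element $P_i$ in each nonzero residue class $i$ modulo $m$, and $P_i$ is the largest gap in its class. Second, the step ``$y-x\ge n_i$ for some minimal generator $n_i\ne m$, pushing $x$ or $y$ into $S$'' does not go through: dominating a generator yields no membership statement about $x$ or $y$. Third, you never say where triangle-freeness enters part (1), and it must enter --- the paper's closing remark exhibits a maximal embedding dimension semigroup that is not triangle-free whose void poset is not a union of such chains, so no argument from maximal embedding dimension alone can work. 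The paper's actual argument is: assuming $x\preccurlyeq y$ with $m\nmid(y-x)$, let $P_i\equiv y$, $P_j\equiv F-x$, $P_k\equiv y-x\pmod m$; then $y\le P_i$, $F-x\le P_j$, and $y-x>P_k$ combine to give $F-(P_i-P_k)<P_j$, hence $F-(P_i-P_k)\notin S$, while $P_i-P_k\notin S$ because $P_i,P_k$ are distinct pseudo-Frobenius numbers; thus $P_i-P_k\in M(S)$, contradicting triangle-freeness. That inequality chain is the missing idea, and without it (or a substitute) the chain decomposition underlying your counting in (2)--(4) is unsupported.
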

\begin{proof}
Since $S$ is of maximal embedding dimension, $|PF(S)|=m-1$. For distinct $P,Q\in PF(S)$, we know that $P-Q\notin S$. This implies that $P\not\equiv Q\pmod{m}$. Moreover since pseudo-Frobenius numbers are gaps, none of them is $0$ modulo $m$.  Therefore we can label $PF(S)=\{P_1,\dots,P_{m-1}\}$ with $P_i\equiv i\pmod{m}$.

If $x,y\in M(S)$ satisfy $x\le y$ and $m \mid (y-x)$, then clearly $x\preccurlyeq y$. Assume for the sake of contradiction that there are $x,y\in M(S)$ satisfying $x\le y,\ m\nmid (y-x)$, and  $x\preccurlyeq y$. Note that none of $y,\ F-x$, and $y-x$ is divisible by $m$.  Suppose that $P_i\equiv y \pmod{m},\ P_j\equiv F-x \pmod{m}$, and $P_k\equiv y-x\pmod{m}$. Since $y, F-x\notin S$, we know that $y\leq P_i$ and $F-x\leq P_j$. Since $y-x\in S$, we know that $y-x>P_k$. Now $P_j\equiv F-(P_i-P_k)\pmod{m}$ and
\[
F-(P_i-P_k)<(P_j+x)-y+(y-x)=P_j,
\]
so $F-(P_i-P_k)\notin S$. Since $P_i, P_j\in PF(S)$, we have $P_i-P_j\notin S$. This means that $P_i-P_j\in M(S)$, but this contradicts the fact that $S$ is triangle-free. This completes the proof of (1).

Since $S$ is triangle-free, Proposition \ref{none P-Q} implies that $P(S)=2^{\kappa(S)}$, where $\kappa(S)$ is the number of connected components of $GPF(S)$.  Suppose $P_i,P_j\in PF(S)\setminus\{F\}$. We see that $F-P_i\preccurlyeq P_j$ if and only if $P_i+P_j\equiv F\pmod{m}$.
This means that the graph $GPF(S)$ mostly consists of components of size $2$ except when $2P_i\equiv F \pmod{m}$, in which case $P_i$ is its own component has a loop on it. The description of $PF(S)$ given above shows that $GPF(S)$ has $m-2$ vertices. 
\begin{itemize}[wide, labelindent=0pt]  
    \item If $m$ is odd then there is exactly one $i$ for which $2P_i\equiv F \pmod{m}$. Therefore, $\kappa(S)=1+\frac{m-3}{2}$.
    \item If $m$ is even and $F$ is odd then there is no $i$ for which $2P_i\equiv F \pmod{m}$. Therefore, $\kappa(S) = \frac{m-2}{2}$.
    \item If $m$ and $F$ are both even then there are two $i$ for which $2P_i\equiv F \pmod{m}$.  Therefore, $\kappa(S) = 2+ \frac{m-4}{2}$.
\end{itemize}
\end{proof}

\begin{example}
For $m\ge 2$, let $S_m$ be the numerical semigroup generated by
\[
\{m\}\cup \{m(m+k-1)+k\mid 1\leq k\leq m-2\}\cup \{m(2m-1)+m-1\}.
\]
We can check that $F(S_m)=m(2m-2)+m-1$ and 
\[
PF(S_m)=\{m(m+k-2)+k\mid 1\leq k\leq m-2\}\cup \{m(2m-2)+m-1\}.
\]
We see that $S_m$ has maximal embedding dimension since $t(S_m)=m-1$. Next we check that $S_m$ is triangle-free. Suppose $P_j,P_i\in PF(S_m)\setminus\{F(S_m)\}$ with $P_j\equiv j\pmod{m}$, $P_i\equiv i\pmod{m}$ and $1\leq i<j\leq m-2$. Then
\[
P_j-P_i=m(m+j-2)+j-\left(m(m+i-2)+i\right)=m(j-i)+j-i.
\]
Let $k=m-1-(j-i)$.  We have
\[
F(S_m)-(P_j-P_i)=m(m+k-1)+k\in S_m.
\]
This means that $P_j-P_i\notin M(S_m)$ and so $S_m$ is triangle-free. When $m$ is odd we have $P(S_m)=2^{\frac{m-1}{2}}$.  When $m$ is even, we see that $F(S_m)$ is odd and $P(S_m)=2^{\frac{m-2}{2}}$.
\end{example}

When $S$ has maximal embedding dimension and is triangle-free, $(\M(S),\preccurlyeq)$ is a union of $m-2$ chains, one for each nonzero residue class modulo $m$ except the one containing $F(S)$.  When $S$ has maximal embedding dimension and is not triangle-free, for example when 
\[
S = \langle 15, 34, 38, 57, 61, 80, 84, 103, 107, 126, 130, 149, 153, 172, 176\rangle,
\] the structure of $(\M(S),\preccurlyeq)$ can be much more complicated.

\section{Acknowledgments}
This paper is based on research that started at the San Diego State University REU 2019.  It was supported by NSF-REU award 1851542. The second author received support from NSF grants DMS 1802281 and DMS 2154223.

\end{document}